\newtheorem{theorem}{Theorem}[section]
\newtheorem{proposition}[theorem]{Proposition}
\newtheorem{definition}[theorem]{Definition}
\newtheorem{example}[theorem]{Example}
\theoremstyle{remark}
\newtheorem{remark}[theorem]{Remark}
\renewenvironment{proof}{{\noindent\bf Proof.}}{\hfill $\Box$\par\vskip3mm}
\newcommand{\Aa}{\mathcal{A}}
\newcommand{\Bb}{\mathcal{B}}
\newcommand{\Cc}{\mathcal{C}}
\newcommand{\Ff}{\mathcal{F}}
\def\QQ_{{\mathbb Q}}
\theoremstyle{definition}
\theoremstyle{remark}
\let\c@equation\c@thm
\numberwithin{equation}{section}
\title{$n$-representations of Quivers}
\author{ADNAN H. ABDULWAHID}
\date{}
\begin{document}

\begin{abstract}
\noindent Let $n \geq 2$. We introduce the notion of $n$-representations of quivers, and we explicitly provide concrete examples of $2$-representations of quivers. We establish the categories of $n$-representations and investigate kernels and cokernels in the categories of $n$-representations of quivers. Further, we construct them in terms of kernels and cokernels of morphisms in the usual categories of quiver representations. We show that every morphism in the categories of  $n$-representations has a canonical decomposition.  Most importantly, we prove that the categories of $n$-representations of quivers are  $k$-linear abelian categories.  \\   
\end{abstract}

\thanks{2016 \textit{Mathematics Subject Classifications}. 20G05, 47A67, 06B15, 16Gxx, 18D10, 19D23, 18Axx}
\date{}
\keywords{Quiver, representation, birepresentation, $n$-representation, additive category, abelian category, $k$-linear category}

\maketitle


\noindent

\section{Introduction}
\label{intro}
The notions of quiver and their representation can be traced back to 1972 when they were introduced by Gabriel \cite{Gabriel}. Since then, it has been studied as a vibrant subject with a strong linkage with many other mathematics areas. This comes from the modern approach that quiver representations theory suggests. Due to its inherent combinatorial flavor, this theory has recently been largely studied as extremely important theory with connections to many theories, such as associative algebra, combinatorics, algebraic topology, algebraic geometry, quantum groups, Hopf algebras, tensor categories. Further, it bridges the gap between combinatorics and category theory, and this simply comes from the well-known fact that there is a forgetful functor, which has a left adjoint,  from the category of small categories to the category of quivers. It turns out that it gives``new techniques, both of combinatorial, geometrical and categorical nature." \cite[p. ix]{Buan}.\\

The interaction area of quivers representation theory with other branches of mathematics can be significantly extended by introducing a generalization of this theory. However, suggesting a useful generalization needs to be done carefully because not all  generalizations are capable of supporting our goal of finding a generalization that plays a successful role in developing this theory.\\

Furthermore, generalizing the notion of an object (or objects) with  structures can be done with no compatibility condition between these structures, or with a compatibility condition between them. For instance, bitopological spaces can be regarded as a generalization of the notion topological spaces. A bitopological space, introduced by Kelly in \cite{Kelly}, is a triple $(X,\tau,\tau')$, where  $X$ is a set equipped with two arbitrary topologies $\tau, \, \tau'$ \cite[p. ix]{Dvalishvili}. Obviously, this definition does not require any compatibility condition between $\tau, \, \tau'$. However, it is still very important, and indeed chapter $VII$ in \cite[p. 318-384]{Dvalishvili}  is totally devoted for  applications of bitopologies.\\ 

On the other hand, there is an another kind of generalization involved with compatibility condition. For example, the concept of corings is a  generalization of that of coalgebras, and it involves certain compatibility conditions. The compatibility conditions are substantially helpful in characterizing and describing  many notions. \\

The notion of $n$-representations of quivers can be introduced as a generalization with a compatibility condition. We start with  $2$-representations of quivers and inductively define $n$-representations quivers. Then we mainly concentrate our study on $2$-representations of quivers because they roughly give a complete description of $n$-representations of quivers which can be established inductively. We alternatively and preferably call $2$-representations of quivers  birepresentations of quivers.\\

Birepresentations of quivers are fundamentally different from representations of biquivers\footnote{A directed graph with usual and dashed arrows will be called a \textbf{biquiver}. Its \textbf{representation} is given by assigning to each vertex a complex vector space, to each usual arrow a linear mapping, and to each dashed arrow a semilinear mapping \cite[p. 237]{Sergeichuk}.} introduced by Sergeichuk in \cite[p. 237]{Sergeichuk}.\\

The main goal of this paper is to introduce the concept of $n$-representations of quivers and set up the basic notions of this concept. Further, we mainly establish the categories of $n$-representations of quivers and show that these categories are abelian. \\
 
As a part of our next paper, we will show that $n$-representations of quivers can be identified as representations of certain quivers. We will intentionally not use this observation in this paper since this allows us to explore a more explicit description and characterization for $n$-representations of quivers without using the categorical perspective description of being ``essentially the same".
  
The sections of this paper can be summarized in the following setting.\\
In Section 2, we give some detailed background on quiver representations and few categorical notions that we need for the next sections. \\
In Section 3, we introduce the notion of $n$-representations of quivers, we explicitly give concrete examples of birepresentations of quivers. In addition, we establish the categories of 
$n$-representations of quivers.  \\
In Section 4, we investigate the kernels and cokernels in the categories of $n$-representations of quivers. We also construct them in terms of kernels and cokernels in the usual categories of quiver representations corresponding to each component. \\
In Section 5, we show that the morphisms in the categories of $n$-representations of quivers have canonical decomposition. We also show that each hom set in these categories is equipped with a structure of an abelian group such that composition of morphisms is biadditive with respect to this structure. We end the paper by showing that   the categories of $n$-representations of quivers are abelian.\\

\section{\textbf{Preliminaries}}\label{s.p}
Throughout this paper $k$ is an algebraically closed field, $n \geq 2$,  and $Q, \,\, Q', \,\, Q_1, \,\, Q_2, ..., \,\, Q_n$ are  quivers. We also denote $kQ$ the path algebra of $Q$. Unless otherwise specified, we will consider only finite, connected, and acyclic quivers. \\
Let $\mathcal{A}$ be a (locally small) category and $A$, $B$  objects in $\mathcal{A}$. We denote by $\mathcal{A} (A,B)$ the set of all morphisms from $A$ to $B$. \\

Let $\Aa$, $\Bb$ be categories. Following \cite[p. 74]{McLarty},  the  \textbf{product category }$\Aa \times \Bb$ is the category whose objects are all pairs of the form $(A,B)$, where $A$ is an object of $\Aa$ and $B$ an object of $\Bb$. An arrow is a pair $(f,g): (A,B) \rightarrow (A',B')$, where $f: A \rightarrow A'$ is an arrow of $\Aa$ and $g: B \rightarrow B'$ is an arrow of $\Bb$. The identity arrow for $\Aa \times \Bb$ is $(id_{\!_{A}},id_{\!_{B}})$ and composition is defined component-wise, so $(f,g)(f',g') = (ff',gg')$. There is a projective functor $P_1:\Aa \times \Bb \rightarrow \Aa$ defined by $P_1(A,B)= A $ and $P_1 (f,g) =f$. Similarly, we have a projective functor  $P_2:\Aa \times \Bb \rightarrow \Bb$ defined by $P_2(A,B)= B $ and $P_2 (f,g) = g$.  For the fundamental concepts of category theory, we refer to \cite{Leinster1}, \cite{Mac Lane1}, \cite{Awodey}, \cite{Rotman}, \cite{Adamek}, \cite{Borceux1}, \cite{Freyd}, \cite{Pareigis}, or  \cite{Mitchell}.\\

Following \cite{Schiffler}, a \textbf{quiver} $Q = (\mathsf{Q}_{\!_{0}}, \mathsf{Q}_{\!_{1}}, s,t)$  consists of
\begin{itemize}
\item $\mathsf{Q}_{\!_{0}}$ a set of vertices,
\item $\mathsf{Q}_{\!_{1}}$ a set of arrows,
\item $s:\mathsf{Q}_{\!_{1}} \rightarrow \mathsf{Q}_{\!_{0}}$ a map from arrows to vertices, mapping an arrow to its starting
point,
\item $t:\mathsf{Q}_{\!_{1}} \rightarrow \mathsf{Q}_{\!_{0}}$ a map from arrows to vertices, mapping an arrow to its terminal point.
\end{itemize}
We will represent an element $\alpha \in \mathsf{Q}_{\!_{1}}$ by drawing an arrow from its starting point $s(\alpha)$ to its endpoint $t(\alpha)$ as follows:
$s(\alpha) \xrightarrow{\alpha} t(\alpha)$.\\
A \textbf{representation} $M = (M_i,\varphi_{\alpha})_{i \in \mathsf{Q}_{\!_{0}}, \alpha \in \mathsf{Q}_{\!_{1}}}$ of a quiver $Q$ is a collection of $k$-vector spaces $M_i$ one for each vertex $i \in \mathsf{Q}_{\!_{0}}$, and a collection of $k$-linear maps $\varphi_{\alpha}: M_{s(\alpha)} \rightarrow M_{t(\alpha)}$ one for each arrow $\alpha \in \mathsf{Q}_{\!_{1}}$. \\
A representation $M$ is called \textbf{finite-dimensional} if each vector space Mi is finite-dimensional.\\
Let $Q$ be a quiver and let $M = (M_i,\varphi_{\alpha})$, $M' = (M'_i,\varphi'_{\alpha})$  be two representations of $Q$. A \textbf{morphism} of representations $f : M \rightarrow M'$ is a collection $(f_i)_{i \in \mathsf{Q}_{\!_{0}}}$ of $k$-linear maps $f_i : M_i \rightarrow M'_i$
such that for each arrow $s(\alpha) \xrightarrow{\alpha} t(\alpha)$ in 
 $\mathsf{Q}_{\!_{1}}$ the diagram 
 \begin{equation} \label{diag.eq01}
\xymatrix{
M_{s(\alpha)} \ar[rr]^{\phi_{\alpha}} \ar[d]_{f_{s(\alpha)}} && M_{t(\alpha)} \ar[d]^{f_{t(\alpha)}}\\
M'_{s(\alpha)} \ar[rr]_(.5){\phi'_{\alpha}} && M'_{t(\alpha)}
} 
\end{equation}
commutes. \\
A morphism of representations $f = (f_i): M \rightarrow M'$  is an isomorphism if each $f_i$ is bijective. The class of all representations that are isomorphic to a given representation $M$ is called the \textbf{isoclass} of $M$.\\
This gives rise to define a category $Rep_k(Q)$ of $k$-linear representations of  $Q$. We denote by $rep_k(Q)$ the full  subcategory of $Rep_k(Q)$ consisting of the finite dimensional representations.

Given two representations $M = (M_i,\phi_{\alpha})$ and  $M' = (M'_i,\phi'_{\alpha'})$ of $Q$, the representation

\begin{equation} \label{diag.eq001}
M \oplus M' \,\,\ = \,\,\ (M_i \oplus M'_i,\begin{bmatrix}
       \phi_{\alpha} & 0            \\[0.3em]
       0 & \phi'_{\alpha}         \\[0.3em] \end{bmatrix})
\end{equation}
is the \textbf{direct sum} of $M$ and $M'$ in $Rep_k(Q)$ \cite[p. 71]{Assem}. \\
A nonzero representation of a quiver $Q$ is said to be \textbf{indecomposable} if it is not isomorphic to a direct sum of two nonzero representations \cite[p. 21]{Etingof1}. \\

We will need the following propositions.

\begin{proposition} \label{p.1} \cite[p. 70]{Assem}
Let $Q$ be a finite quiver. Then $Rep_k(Q)$ and $rep_k(Q)$
are $k$-linear abelian categories.
\end{proposition}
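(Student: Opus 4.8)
The plan is to verify directly the axioms of a $k$-linear abelian category, reducing each one to the corresponding known property of the category $\mathrm{Vect}_k$ of $k$-vector spaces by working componentwise over the vertices of $Q$; the commutativity of the square \eqref{diag.eq01} is precisely what makes each vertexwise construction glue into an honest morphism or subobject of representations.

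First I would establish additivity and $k$-linearity. Given $M,M'$ and morphisms $f=(f_i),\,g=(g_i)\colon M\to M'$, set $f+g:=(f_i+g_i)$ and $\lambda f:=(\lambda f_i)$ for $\lambda\in k$. Because each $\Hom_k(M_i,M'_i)$ is a $k$-vector space and \eqref{diag.eq01} is preserved under $k$-linear combinations of its vertical arrows, these operations make $Rep_k(Q)(M,M')$ a $k$-vector space, and composition is $k$-bilinear since it is so componentwise in $\mathrm{Vect}_k$. The representation with $M_i=0$ and zero structure maps is a zero object, and the direct sum of \eqref{diag.eq001} realizes finite biproducts, so the category is additive and $k$-linear.

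Next I would build kernels and cokernels vertexwise. For $f\colon M\to M'$ put $(\Ker f)_i:=\Ker f_i$, with structure maps the restrictions of the $\varphi_\alpha$; commutativity of \eqref{diag.eq01} shows that $\varphi_\alpha$ carries $\Ker f_{s(\alpha)}$ into $\Ker f_{t(\alpha)}$, so this is a subrepresentation whose inclusion is a kernel of $f$ in $Rep_k(Q)$. Dually, $(\Coker f)_i:=M'_i/\im f_i$ with the induced maps is a cokernel. Since subspaces and quotients of finite-dimensional spaces are finite-dimensional, these constructions preserve finite-dimensionality, so $rep_k(Q)$ is closed under them as well.

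Finally I would check normality: $f$ is monic iff every $f_i$ is injective, and in that case $f$ is the kernel of its cokernel, verified componentwise using that $\mathrm{Vect}_k$ is abelian; dually every epimorphism is a cokernel. Assembling these facts yields the abelian structure for both $Rep_k(Q)$ and $rep_k(Q)$. The main point — more bookkeeping than genuine difficulty — is exactly the verification that the vertexwise kernels and cokernels assemble into sub- and quotient representations via \eqref{diag.eq01}, which is where the naturality squares do all the work; one can bypass these checks entirely by invoking the equivalence $Rep_k(Q)\simeq \mathrm{Mod}\,kQ$ with left modules over the path algebra $kQ$, together with the standard fact that any module category over a ring is $k$-linear abelian.
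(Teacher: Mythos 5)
The paper does not actually prove this proposition: it is imported as a known result from \cite[p.~70]{Assem}, so there is no internal argument to compare yours against. Your componentwise verification is correct and is the standard proof of this fact; it is also, in effect, the template the paper itself follows later for the harder analogues, where Section~4 builds kernels and cokernels of morphisms of birepresentations by restricting (resp.\ inducing) the connecting maps $\psi^{\alpha}_{\beta}$ exactly as you restrict (resp.\ induce) the structure maps $\varphi_{\alpha}$ across the naturality squares, and Section~5 endows the hom sets with their abelian group structure vertexwise. One caution about your proposed shortcut through the path algebra: the equivalence the paper records (Proposition~\ref{p.Equiv.Rep.Mod}) is stated for finite, \emph{connected, acyclic} quivers, whereas the present proposition assumes only that $Q$ is finite. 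The equivalence $Rep_k(Q)\simeq Mod\,kQ$ does hold for an arbitrary finite quiver, but the identification of $rep_k(Q)$ with \emph{finitely generated} $kQ$-modules genuinely uses acyclicity ($kQ$ is infinite-dimensional when $Q$ has oriented cycles, and then a finitely generated module need not be finite-dimensional). So for $rep_k(Q)$ the direct componentwise argument you gave first is the safe route — one checks, as you do, that the vertexwise kernels, cokernels, and finite direct sums of finite-dimensional representations stay finite-dimensional, so $rep_k(Q)$ is a full subcategory closed under these constructions and hence abelian — and the module-theoretic bypass is better regarded as a remark than as the proof.
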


\begin{proposition} \label{p.Equiv.Rep.Mod} \cite[p. 74]{Assem}
Let $Q$ be a finite, connected, and acyclic quiver. There
exists an equivalence of categories $Mod \, kQ \simeq Rep_k(Q)$ that restricts to an equivalence $mod \, kQ \simeq rep_k(Q)$, where $kQ$ is the path algebra of $Q$, $Mod \, kQ$ denotes the category of right $kQ$-modules, and  $mod \, kQ$ denotes the full subcategory of $Mod \, kQ$ consisting of the finitely generated right $kQ$-modules.
\end{proposition}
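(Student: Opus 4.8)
The plan is to exhibit an explicit pair of quasi-inverse functors between $Rep_k(Q)$ and the category $Mod\, kQ$ of right $kQ$-modules, and then to check that they restrict to the finite-dimensional subcategories. First I would record the basic structure of the path algebra: since $\mathsf{Q}_{\!_{0}}$ is finite, the trivial paths $\{e_i\}_{i \in \mathsf{Q}_{\!_{0}}}$ form a complete set of orthogonal idempotents with $\sum_{i \in \mathsf{Q}_{\!_{0}}} e_i = 1$; each arrow factors as $\alpha = e_{s(\alpha)}\,\alpha\,e_{t(\alpha)}$ according to the composition convention fixed in $kQ$; and, because $Q$ is acyclic, there are only finitely many paths, so $kQ$ is finite-dimensional.

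Next I would define the functor $G : Mod\, kQ \to Rep_k(Q)$. Given a right module $N$, put $N_i := N e_i$ for each $i \in \mathsf{Q}_{\!_{0}}$, and for each arrow $\alpha$ let $\varphi_\alpha : N_{s(\alpha)} \to N_{t(\alpha)}$ be $\varphi_\alpha(x) = x\cdot\alpha$; the factorization $\alpha = e_{s(\alpha)}\alpha e_{t(\alpha)}$ guarantees $x \cdot \alpha \in N e_{t(\alpha)}$, so $G(N)$ is a genuine representation. On a module homomorphism $g$, the $kQ$-linearity forces $g(Ne_i) \subseteq N'e_i$, so $G(g) := (g|_{Ne_i})_i$ is automatically a morphism of representations. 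Conversely I would define $F : Rep_k(Q) \to Mod\, kQ$ by sending $M = (M_i,\varphi_\alpha)$ to the vector space $\bigoplus_{i \in \mathsf{Q}_{\!_{0}}} M_i$, with $e_i$ acting as the projection onto the summand $M_i$ and each arrow acting through the corresponding map $\varphi_\alpha$ (extended to arbitrary paths by the corresponding composite of the $\varphi$'s). Here the commuting squares \eqref{diag.eq01} are precisely what make $F(f) := \bigoplus_i f_i$ into a $kQ$-module homomorphism.

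The core of the argument is to produce natural isomorphisms $GF \cong \mathrm{id}_{Rep_k(Q)}$ and $FG \cong \mathrm{id}_{Mod\, kQ}$. The first is essentially tautological: $(GFM)_i = (FM)e_i = M_i$, and the arrow maps recovered by $G$ coincide with the original $\varphi_\alpha$. For the second I would invoke the internal direct-sum decomposition $N = \bigoplus_{i \in \mathsf{Q}_{\!_{0}}} Ne_i$, valid precisely because the $e_i$ are orthogonal idempotents summing to $1$; this is the step where finiteness of $\mathsf{Q}_{\!_{0}}$ is indispensable. The canonical comparison map $FGN \to N$ is then bijective and $kQ$-linear, and naturality is immediate since every assignment was made summand-wise.

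Finally I would treat the restriction to $mod\, kQ$ and $rep_k(Q)$. A representation $M$ is finite-dimensional iff $\bigoplus_i M_i$ is a finite-dimensional vector space, hence iff $FM$ is finite-dimensional over $k$; and since $kQ$ is itself finite-dimensional (this is where acyclicity enters), finite-dimensionality and finite generation agree for $kQ$-modules, so $F$ and $G$ interchange $rep_k(Q)$ and $mod\, kQ$. The step I expect to demand the most care is verifying that $F$ genuinely respects the full multiplication of $kQ$ — that the assignment on paths is compatible with path concatenation and with the idempotent relations — since reconciling this with the direction of the maps $\varphi_\alpha$ and with the composition convention fixed for $kQ$ is exactly where an order-type error would hide.
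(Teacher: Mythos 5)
Your proposal is correct and takes essentially the same route as the paper's source: the paper itself gives no proof, citing \cite[p.~74]{Assem}, and the argument there is exactly yours --- the quasi-inverse functors $N \mapsto (Ne_i, \,x \mapsto x\alpha)$ and $M \mapsto \bigoplus_{i} M_i$, with the decomposition $N = \bigoplus_i Ne_i$ coming from the finite complete set of orthogonal idempotents, and with acyclicity used only to make $kQ$ finite-dimensional so that finitely generated and finite-dimensional modules coincide for the restricted equivalence. You also correctly flag the one convention-sensitive point, $\alpha = e_{s(\alpha)}\alpha e_{t(\alpha)}$, which is what makes the right-module action produce maps $Ne_{s(\alpha)} \to Ne_{t(\alpha)}$ in the direction required by the representation.
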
  

This is a very brief review of the basic concepts involved with our work. For the basic notions of quiver representations theory, we refer the reader to \cite{Assem}, \cite{Schiffler}, \cite{Auslander1}, \cite{Barot}, \cite{Etingof1}, \cite{Benson}, \cite{Zimmermann}.\\

\vspace{.2cm}

\section{\textbf{$n$-representations of Quivers: Basic Concepts}}\label{s.n.rep.quivers}
Let $Q = (\mathsf{Q}_{\!_{0}}, \mathsf{Q}_{\!_{1}}, s,t)$, $Q' = (\mathsf{Q}'_{\!_{0}}, \mathsf{Q}'_{\!_{1}}, s',t')$ be quivers. 

\begin{definition} \label{def.1} 
A \textbf{$2$-representation} of  $(Q,Q')$  (or a \textbf{birepresentation} of $(Q,Q')$) is a triple $\bar{M} = ((M_i,\phi_{\alpha}),(M'_{i'},\phi'_{\beta}), (\psi^{\alpha}_{\beta}))_{i \in \mathsf{Q}_{\!_{0}}, i' \in Q'_{\!_{0}}, \alpha \in \mathsf{Q}_{\!_{1}},\beta \in \mathsf{Q}'_{\!_{1}}}$, where $(M_i,\phi_{\alpha}),(M'_{i'},\phi'_{\beta})$ are representations of $Q,Q'$  respectively, and $(\psi^{\alpha}_{\beta})$ is a collection of $k$-linear maps $\psi^{\alpha}_{\beta}: M_{t(\alpha)} \rightarrow M'_{s(\beta)} $,  one for each pair of arrows $(\alpha,\beta) \in \mathsf{Q}_{\!_{1}} \times  \mathsf{Q}'_{\!_{1}}$. \\ 

Unless confusion is possible, we denote a birepresentation simply by $\bar{M} = (M,M',\psi)$. Next, we inductively define $n$-representations for any integer $n \geq 2$. \\
For any  $m \in \{2,\,\,..., \,\,n\}$, let $Q_m = (\mathsf{Q}^{(m)}_{\!_{0}}, \mathsf{Q}^{(m)}_{\!_{1}}, s^{(m)},t^{(m)})$ be a quiver.  A \textbf{$n$-representation} of $(Q_1, \,\, Q_2, \,\,... \,\, ,  Q_n)$ is $(2n-1)$-tuple  $\bar{V} = (V^{(1)},V^{(2)},...,V^{(n)},\psi_{\!_{1}}, \psi_{\!_{2}}, ..., \psi_{\!_{n-1}})$, where for every $m \in \{1, \,\,2,\,\,..., \,\,n\}$, $V^{(m)}$ is a representation of  $Q_m$, and $(\psi_{\!_{m}\gamma^{(m-1)}}^{\gamma^{(m)}}) $ is a collection of $k$-linear maps 
\begin{center}
•$\psi_{\!_{m}\gamma^{(m-1)}}^{\gamma^{(m)}}: V^{(m-1)}_{t^{(m-1)}(\gamma^{(m-1)})} \rightarrow V^{(m-1)}_{s^{(m)}(\gamma^{(m)})} $,
\end{center}
  one for each pair of arrows $(\gamma^{(m-1)},\gamma^{(m)}) \in \mathsf{Q}^{(m-1)}_{\!_{1}} \times  \mathsf{Q}^{(m)}_{\!_{1}}$  and $m \in \{2,\,\,..., \,\,n\}$.   \\ 

\end{definition}

\begin{remark} \label{r.inductive} \textbf{•}
\begin{enumerate}[label=(\roman*)]
\item When no confusion is possible, we simply write $s,t$ instead of $s',t'$ respectively, and for every  $m \in \{1, \,\,2,\,\,..., \,\,n\}$, we write $s,t$ instead of $s^{(m)},\,t^{(m)}$ respectively. 
\item It is clear that if $(V^{(1)},V^{(2)},...,V^{(n)},\psi_{\!_{1}}, \psi_{\!_{2}}, ..., \psi_{\!_{n-1}})$ is an $n$-representation of $(Q_1, \,\, Q_2, \,\,... \,\, ,  Q_n)$, then  $(V^{(1)},V^{(2)},...,V^{(n-1)},\psi_{\!_{1}}, \psi_{\!_{2}}, ..., \psi_{\!_{n-2}})$ is an $(n-1)$-representation of $(Q_1, \,\, Q_2, \,\,... \,\, ,  Q_{n-1})$ for every integer $n \geq 2$.
\item Part $(ii)$ implies that for any integer $n > 2$, $n$-representations roughly inherit all the properties and the universal constructions that  $(n-1)$-representations have. Thus, we mostly focus on studying birepresentations since they can be regarded as a mirror in which one can see a clear decription of $n$-representations for any integer $n > 2$.\\
\end{enumerate} 

\end{remark}

\begin{example} \label{ex.1} 
Let $Q,Q'$ be the following quivers 
\begin{equation} \label{diag.eq06}
\xymatrix{
&&\\
Q: & 1 \ar[r]^{} & 2 
}
\hspace{80pt}
\xymatrix{
&1 \ar[dr]^{} & &\\
Q':&&3  &4 \ar[l]^{}\\
&2  \ar[ur]^{} &&
}
\end{equation}
and consider the following:
\begin{equation} \label{diag.eq07}
\xymatrix{
&k \ar[dr]^{\begin{bmatrix}
       1            \\[0.3em]
       0         \\[0.3em]
       \end{bmatrix}} & &\\
M &&k^2  &k \ar[l]_{\begin{bmatrix}
       1            \\[0.3em]
       1         \\[0.3em]
       \end{bmatrix}}\\
&k  \ar[ur]_{\begin{bmatrix}
       0            \\[0.3em]
       1         \\[0.3em]
       \end{bmatrix}} &&
}
\hspace{70pt}
\xymatrix{
&&&\\
M' &k  &k \ar[l]_{1}
}
\end{equation}
Then $M$ (respectively $M'$) is a representation of $Q$ ((respectively $Q'$) \cite{Schiffler}. The following are birepresentations of $(Q,Q')$.
\begin{equation} \label{diag.eq08}
\xymatrix{
&k \ar[dr]^{\begin{bmatrix}
       1            \\[0.3em]
       0         \\[0.3em]
       \end{bmatrix}} & &\\
\bar{M} &&k^2  &k \ar[l]_{\begin{bmatrix}
       1            \\[0.3em]
       1         \\[0.3em]
       \end{bmatrix}} & &k \ar[ll]_{1} \ar@/_5pc/[llllu]_{1} 
       \ar@/^5pc/[lllld]^{1} &k \ar[l]_{1}\\
&k  \ar[ur]_{\begin{bmatrix}
       0            \\[0.3em]
       1         \\[0.3em]
       \end{bmatrix}} &&
}
\end{equation}
\begin{equation} \label{diag.eq09}
\xymatrix{
&k \ar[dr]^{\begin{bmatrix}
       1            \\[0.3em]
       0         \\[0.3em]
       \end{bmatrix}} & &\\
\bar{N} &&k^2  &k \ar[l]_{\begin{bmatrix}
       1            \\[0.3em]
       1         \\[0.3em]
       \end{bmatrix}} & &k \ar[ll]_{1} \ar@/_5pc/[llllu]_{0} 
       \ar@/^5pc/[lllld]^{0} &k \ar[l]_{1}\\
&k  \ar[ur]_{\begin{bmatrix}
       0            \\[0.3em]
       1         \\[0.3em]
       \end{bmatrix}} &&
}
\end{equation}
\end{example} 

\vspace{.1cm}

\begin{definition} \label{def.2} 
Let $\bar{V} =(V,V',\psi), \,\, \bar{W} =(W,W',\psi')$ be birepresentations of $(Q,Q')$. Write 
$ V = (V_i,\phi_{\alpha})$, $V'=(V'_{i'},\mu_{\beta})$, 
$ W = (W_i,\phi'_{\alpha})$, $W'=,(W'_{i'},\mu'_{\beta})$. A morphism of birepresentations  $ \bar{f}: \bar{V} \rightarrow \bar{W}$ is a pair  $\bar{f} = (f,f')$, where $f=(f_i):  (V_i,\phi_{\alpha}) \rightarrow (W_i,\phi'_{\alpha})$, $f'=(f'_{i'}):  (V'_{i'},\mu_{\beta}) \rightarrow (W'_{i'},\mu'_{\beta})$ are morphisms in  $Rep_k(Q)$,  $Rep_k(Q')$  respectively such that the following diagram commutes. 
\begin{equation} \label{diag.eq010}
\xymatrix{
V_{s(\alpha)} \ar[rr]^{\phi_{\alpha}} \ar[dr]_{f_{s(\alpha)}} && V_{t(\alpha)} \ar[dd]|\hole^(.3){\psi^{\alpha}_{\beta}} \ar[dr]^{f_{t(\alpha)}}\\
& W_{s(\alpha)} \ar[rr]|(.3){\phi'_{\alpha}} && W_{t(\alpha)} \ar[dd]^(.32){\psi'^{\alpha}_{\beta}}\\
&& V'_{s(\beta)} \ar[rr]|\hole|(.65){\mu_{\beta}} \ar[dr]_{f'_{s(\beta)}} &&
V'_{t(\beta}  \ar[dr]^{f'_{t(\beta)}} \\
&&&  W'_{s(\beta)}  \ar[rr]_{\mu'_{\beta}} &&  W'_{t(\beta)}
}
\end{equation}
The composition of two maps $(f,f')$ and $(g,g')$ can be depicted as the following diagram.
\begin{equation} \label{def.eq011}
\xymatrix{
V_{s(\alpha)} \ar[rr]^{\phi_{\alpha}} \ar[dr]_{f_{s(\alpha)}} && V_{t(\alpha)} \ar[ddd]|!{[dd];[d]}\hole|!{[ddd];[dd]}\hole^(.5){\psi^{\alpha}_{\beta}}  \ar[dr]^{f_{t(\alpha)}}\\
& W_{s(\alpha)} \ar[rr]|(.3){\phi'_{\alpha}}  \ar[dr]_{g_{s(\alpha)}}
&& W_{t(\alpha)} \ar[ddd]|!{[dd];[d]}\hole^(.5){\psi'^{\alpha}_{\beta}}  \ar[dr]^{g_{t(\alpha)}}\\
&& U_{s(\alpha)} \ar[rr]|(.3){\phi''_{\alpha}} && U_{t(\alpha)} \ar[ddd]|(.5){\psi''^{\alpha}_{\beta}} \\
&& V'_{s(\beta)} \ar[rr]|\hole|(.65){\mu_{\beta}} \ar[dr]_{f'_{s(\beta)}} &&
V'_{t(\beta}  \ar[dr]^{f'_{t(\beta)}} \\
&&&  W'_{s(\beta)}  \ar[dr]_{g'_{s(\beta)}} \ar[rr]|\hole^(.3){\mu'_{\beta}} &&  W'_{t(\beta)}\ar[dr]^{g'_{t(\beta)}}\\
&&&&  U'_{s(\beta)}  \ar[rr]_(.5){\mu''_{\beta}} &&  U'_{t(\beta)}
}
\end{equation}
In general, if $\bar{V} = (V^{(1)},V^{(2)},...,V^{(n)},\psi_{\!_{1}}, \psi_{\!_{2}}, ..., \psi_{\!_{n-1}})$, $\bar{W} = (W^{(1)},W^{(2)},...,W^{(n)},\psi'_{\!_{1}}, \psi'_{\!_{2}}, ..., \psi'_{\!_{n-1}})$ are  $n$-representations of $(Q_1, \,\, Q_2, \,\,... \,\, ,  Q_n)$, then a morphism of $n$-representations 
$\bar{f}: \bar{V} \rightarrow \bar{W}$ is $n$-tuple $\bar{f}=(f^{\!^{(1)}}, f^{\!^{(2)}}, ..., f^{\!^{(n-1)}})$, where 

\begin{center}
•$f^{\!^{(m)}} = (f^{\!^{(m)}}_{i^{(m)}}):  (V_{{i^{(m)}}},\phi^{{i^{(m)}}}_{\gamma^{(m)}}) \rightarrow  (W_{{i^{(m)}}},\mu^{{i^{(m)}}}_{\gamma^{(m)}})$,
\end{center}
 is a morphism in  $Rep_k(Q_m)$ for any $m \in \{2,\,\,..., \,\,n\}$, and for each pair of arrows $(\gamma^{(m-1)},\gamma^{(m)}) \in \mathsf{Q}^{(m-1)}_{\!_{1}} \times  \mathsf{Q}^{(m)}_{\!_{1}}$ the following diagram is commutative.
 
\begin{equation} \label{diag.eq010.1}
\xymatrix{ 
	V^{(m-1)}_{t^{(m-1)}(\gamma^{(m-1)})} \ar[rrr]^{\psi_{\!_{m}\gamma^{(m-1)}}^{\gamma^{(m)}}} \ar[dd]_{f^{(m-1)}_{t^{(m-1)}(\gamma^{(m-1)})}} &&& V^{(m)}_{s^{(m)}(\gamma^{(m)})} \ar[dd]^{f^{(m)}_{s^{(m)}(\gamma^{(m)})}} \\
	&&&\\
	W^{(m-1)}_{t^{(m-1)}(\gamma^{(m-1)})} \ar[rrr]_{{\psi'}_{\!_{m}\gamma^{(m-1)}}^{\gamma^{(m)}}} &&& W^{(m)}_{s^{(m)}(\gamma^{(m)})} 
}
\end{equation}
for every $m \in \{2,\,\,..., \,\,n\}$.

\vspace{.2cm}

A morphism of $n$-representations can be depicted as:

\begin{equation} \label{diag.eq010.2}
\xymatrix{ 
	&  \ar[rr]&&  \ar[dd]&& \ar[rr]\ar[dd]|\hole &&  \ar[dd] &&  \ar[rr] \ar[dd] &&  \ar[dd] &\\
	 \ar[ur] \ar[rr]    &&  \ar[dd] \ar[ur]&& \ar[rr]\ar[dd] \ar[ur]&&  \ar[dd] \ar[ur]&&   \ar[rr] \ar[ur] \ar[dd]&&   \ar[dd] \ar[ur]&\\
	&&& \ar[rr]|\hole&&   &&  \ar[rr]|\hole&&   && \ar@{--}[r]  &\\
	&& \ar[rr]\ar[ur]&&  \ar[ur]&&  \ar[rr]\ar[ur]&&  \ar[ur]  & &   \ar[ur] \ar@{--}[rr] &&
}
\end{equation}

\end{definition}

\vspace{.2cm}

\begin{remark} \label{r.1} \textbf{•}

\begin{enumerate}[label=(\roman*)]
\item The above definition gives rise to form a category  $Rep_{\!_{(Q,Q')}}$ of $k$-linear birepresentations of  $(Q,Q')$. We denote by  $rep_{\!_{(Q,Q')}}$ the full subcategory of $Rep_{\!_{(Q,Q')}}$  consisting of the finite dimensional birepresentations. Similarly, it also creates a category $Rep_{\!_{(Q_1,Q_2,...,Q_n)}}$ of $n$-representations. We denote $rep_{\!_{(Q_1,Q_2,...,Q_n)}}$ the full subcategory of $Rep_{\!_{(Q_1,Q_2,...,Q_n)}}$ consisting of the finite dimensional  $n$-representations.

\item For any  $m \in \{2,\,\,..., \,\,n\}$, let $Q_m = (\mathsf{Q}^{(m)}_{\!_{0}}, \mathsf{Q}^{(m)}_{\!_{1}}, s^{(m)},t^{(m)})$ be a quiver and fix $j \in \{2,\,\,..., \,\,n\}$. Let $\Upsilon_{Rep_k(Q_j)}$ be the subcategory of $Rep_{\!_{(Q_1,Q_2,...,Q_n)}}$ whose objects are $(2n-1)$-tuples  $\bar{X} = (0,0,...,V^{(j)},0,...,0,\psi_{\!_{1}}, \psi_{\!_{2}}, ..., \psi_{\!_{n-1}})$, where $V^{(j)}$ is a representation of $Q_j$,  and $\psi_{\!_{m}\gamma^{(m-1)}}^{\gamma^{(m)}} = 0$ for every pair of arrows $(\gamma^{(m-1)},\gamma^{(m)}) \in \mathsf{Q}^{(m-1)}_{\!_{1}} \times  \mathsf{Q}^{(m)}_{\!_{1}}$ and  $m \in \{2,\,\,..., \,\,n\}$. Then $\Upsilon_{Rep_k(Q_j)}$ is clearly a full subcategory of  $Rep_{\!_{(Q_1,Q_2,...,Q_n)}}$. Notably, we have an equivalence of categories $\Upsilon_{Rep_k(Q_j)} \simeq Rep_k(Q_j)$, and thus by by Proposition (\ref{p.Equiv.Rep.Mod}), we have $Rep_k(Q_j) \simeq \Upsilon_{Rep_k(Q_j)} \simeq Mod \, kQ_j $. It turns out that the category  $Rep_k(Q_j)$ and $ Mod \, kQ_j)$ can be identified as full subcategories of $Rep_{\!_{(Q_1,Q_2,...,Q_n)}}$. \\
The category $\Upsilon_{Rep_k(Q_j)}$ has a full subcategory  $\Upsilon_{rep_k(Q_j)}$ when we restrict the objects on the finite dimensional representations. Therefore, we also have  $rep_k(Q_j) \simeq \Upsilon_{rep_k(Q_j)} \simeq mod \, kQ_j $. \\
\end{enumerate}
\end{remark}

\begin{remark} \label{r.bicategory}  \,\, Let $\Bb_0$ be the class of all quivers. One might consider the class $\Bb_0$ and full subcategories of the categories of birepresentations of quivers to build a bicategory. Indeed, there is a bicategory $\Bb$ consists of 
\begin{itemize}
\item the objects or the $0$-cells of $\Bb$ are simply the elements of  $\Bb_0$
 
\item for each $Q, Q' \in \Bb_0$, we have  $\Bb(Q,Q') =  Rep_k(Q) \times Rep_k(Q')$, whose objects are the $1$-cells of $\Bb$, and whose   morphisms are the $2$-cells of $\Bb$

\item for each $Q, Q', Q'' \in \Bb_0$, \,\, a composition functor 
\begin{center}
•$\Ff: Rep_k(Q') \times Rep_k(Q'') \,\,\, \times \,\,\, Rep_k(Q) \times Rep_k(Q') \rightarrow Rep_k(Q) \times Rep_k(Q'')$
\end{center}
defined by:\\
\begin{center}
•$\Ff ((N',N''),(M,M')) = (M,N''), \,\,\,\,\, \Ff ((g',g''),(f,f')) = (f,g'')$
on $1$-cells $(M,M'), (N',N'')$ and $2$-cells $(f,f'), (g',g'')$.
\end{center}
\item for any  $Q \in \Bb_0$ and for each $(M,M') \in \Bb(Q,Q)$, we have $\Ff ((M,M'),(M,M)) = (M,M')$ \,\, and \,\, $\Ff ((M',M'),(M,M')) = (M,M')$. Furthermore, for any $2$-cell $(f,f')$, we have $\Ff ((f',f'),(f,f')) = (f,f')$  \,\, and \,\, $\Ff ((f,f'),(f,f)) = (f,f')$. Thus, the identity and the unit coherence axioms hold. 
\end{itemize}

The rest of bicategories axioms are obviously satisfied. \,\, For each $Q, Q' \in \Bb_0$, let $\beth_{\!_{(Q,Q')}}$ be the full subcategory of $Rep_{\!_{(Q,Q')}}$ whose objects are the triples $(X,X',\Psi)$, where $(X,X') \in Rep_k(Q) \times Rep_k(Q')$ and $\Psi^{\alpha}_{\beta} = 0$ for every pair of arrows $(\alpha,\beta) \in \mathsf{Q}_{\!_{1}} \times  \mathsf{Q}'_{\!_{1}}$, and whose morphisms are usual morphisms of birepresentations between them. \,\, Clearly, \,\, $\beth_{\!_{(Q,Q')}} \cong Rep_k(Q) \times Rep_k(Q')$  for any $Q, Q' \in \Bb_0$. Thus,  by considering the class $\Bb_0$ and the full subcategories described above of the birepresentations categories of quivers, we can always build a bicategory as above.\\
Obviously, \,\, the discussion above implies that for each $Q, Q' \in \Bb_0$, the product category  $Rep_k(Q) \times Rep_k(Q')$ can be viewed as a full subcategory of $Rep_{\!_{(Q,Q')}}$. Further, it implies that the product category $Rep_k(Q_1) \times Rep_k(Q_2) \times ...\times  Rep_k(Q_n) $ can be viewed as a full subcategory of $Rep_{\!_{(Q_1,Q_2,...,Q_n)}}$, where $Q_1,Q_2,...,Q_n \in \Bb_0$ and $n \geq 2$.\\
We also have the same analogue if we replace $Rep_{\!_{(Q_1,Q_2,...,Q_n)}}$,  by $rep_{\!_{(Q_1,Q_2,...,Q_n)}}$, and $Rep_k(Q_1)$, $Rep_k(Q_2)$, ... ,  $Rep_k(Q_n) $ by  $rep_k(Q_1), rep_k(Q_2)$,  ... , $rep_k(Q_n) $  respectively. For the basic notions of bicategories, we refer the reader to \cite{Leinster2}. \\
\end{remark}

\begin{example} \label{ex.2} 
Let $Q, \, Q'$ be the quivers defined in Example \ref{ex.1} and consider the following:\\

\begin{equation} \label{diag.eq012}
\xymatrix{
&k \ar[dr]^{\begin{bmatrix}
       1            \\[0.3em]
       0         \\[0.3em]
       \end{bmatrix}} & &\\
V &&k^2  &k \ar[l]_{\begin{bmatrix}
       1            \\[0.3em]
       1         \\[0.3em]
       \end{bmatrix}}\\
&k  \ar[ur]_{\begin{bmatrix}
       0            \\[0.3em]
       1         \\[0.3em]
       \end{bmatrix}} &&
}
\hspace{70pt}
\xymatrix{
&k \ar[dr]^{1} & &\\
W &&k  &k \ar[l]_{1}\\
&k  \ar[ur]_{1} &&
}
\end{equation}

\begin{equation} \label{diag.eq013}
\xymatrix{
V' &k  &k \ar[l]_{1}  
}
\hspace{70pt}
\xymatrix{
W'&k  &k \ar[l]_{1}  
}
\end{equation}

\vspace{.2cm}

Then $V, \, V'$ (respectively  $W, \, W'$) are representations of $Q$ (respectively $Q'$)  \cite{Schiffler}. Furthermore, it is straightforward to verify that $Rep_k(Q)(V,W) \cong k^2$ and  $Rep_k(Q)(V',W') \cong k$. We refer the reader to \cite{Schiffler} for more details. Consider the following.
\begin{equation} \label{diag.eq014}
\xymatrix{
&k  \ar[dr]^{\begin{bmatrix}
       1            \\[0.3em]
       0         \\[0.3em]
       \end{bmatrix}} & &\\
\bar{V} = (V,V',\psi) &&k^2 &k \ar[l]_{\begin{bmatrix}
       1            \\[0.3em]
       1         \\[0.3em]
       \end{bmatrix}} & &k \ar[ll]_{1} \ar@/_5pc/[llllu]_(.57){1}  \ar@/^5pc/[lllld]^(.57){1} \ar[ll]_{1} 
        &k \ar[l]_{1} \\
&k  \ar[ur]_{\begin{bmatrix}
       0            \\[0.3em]
       1         \\[0.3em]
       \end{bmatrix}} &&&&&
}
\end{equation}

\begin{equation} \label{diag.eq014.1}
\xymatrix{
&&&&&k  \ar@/^1.pc/[ddr]^(.4){\begin{bmatrix}
       1            \\[0.3em]
       0         \\[0.3em]
       \end{bmatrix}} & &\\
       &&&&&&&\\
\underline{\bar{V}} = (V',V,\psi')  && k    
        &k \ar[l]_{1}  &&&k^2 \ar@/_3pc/[lll]_(.6){\begin{bmatrix}
       1 & 0       \\[0.3em]
              \end{bmatrix}} 
              \ar@/^3pc/[lll]^(.6){\begin{bmatrix}
       0 & 1       \\[0.3em]
              \end{bmatrix}}
               \ar[lll]_(.5){\begin{bmatrix}
       0 & 0       \\[0.3em]
              \end{bmatrix}}
              &k \ar[l]_{\begin{bmatrix}
       1            \\[0.3em]
       1         \\[0.3em]
       \end{bmatrix}} & &\\
         &&&&&&&\\
&&&&&k  \ar@/_1.pc/[uur]_{\begin{bmatrix}
       0            \\[0.3em]
       1         \\[0.3em]
       \end{bmatrix}} &&&&&
}
\end{equation}

\begin{equation} \label{diag.eq015}
\xymatrix{
&k  \ar[dr]^{1} & &\\
\bar{W} = (W,W', \psi') &&k &k \ar[l]_{1} & &k \ar@/_3pc/[llllu]_(.57){0} 
       \ar@/^3pc/[lllld]^{0}  \ar[ll]_{1} 
        &k \ar[l]_(.57){0} \\
&k  \ar[ur]_{1} &&&&&
}
\end{equation}

\begin{equation} \label{diag.eq015.1}
\xymatrix{
&&&&&k  \ar@/^1.pc/[ddr]^(.4){1} & &\\
       &&&&&&&\\
\underline{\bar{W}} = (W',W, \underline{\psi'})  && k    
        &k \ar[l]_{1}  &&&k \ar@/_3pc/[lll]_(.6){1} 
              \ar@/^3pc/[lll]^(.6){1}
               \ar[lll]_(.5){0}
              &k \ar[l]_{1} & &\\
         &&&&&&&\\
&&&&&k  \ar@/_1.pc/[uur]_{1} &&&&&
}
\end{equation}

\vspace{.2cm}

Then $\bar{V}, \,\, \bar{W}$ are birepresentations of $(Q, Q')$, and $\underline{\bar{V}}, \,\, \underline{\bar{W}}$ are  birepresentations of  $(Q', Q)$. 

To compute $Rep_{\!_{(Q,Q')}}(\bar{V},\bar{W})$, consider the following diagram.\\

\begin{equation} \label{diag.eq016}
\xymatrix{
&&&k  \ar@/_2pc/[dddddd]_{[a]} \ar[dr]^{\begin{bmatrix}
       1            \\[0.3em]
       0         \\[0.3em]
       \end{bmatrix}} & &\\
\bar{V}\ar[dddddd]_{} &&&&k^2 \ar@/^1pc/[dddddd]|{[c \,\,\,d]} &k \ar@/^1pc/[dddddd]|{[e]}\ar[l]_{\begin{bmatrix}
       1            \\[0.3em]
       1         \\[0.3em]
       \end{bmatrix}} & &k \ar@/^1pc/[dddddd]|{[l]}\ar[ll]_{1} \ar@/_5pc/[llllu]_{1} 
       \ar@/^5pc/[lllld]|(.35){1} &k \ar[l]_{1} \ar@/^1pc/[dddddd]|{[m]}\\
&&& k  \ar@/_2pc/[dddddd]_{[b]}\ar[ur]_{\begin{bmatrix}
       0            \\[0.3em]
       1         \\[0.3em]
       \end{bmatrix}} && \\
&&&&&&\\
&&&&&&\\
&&&&&&\\
&&&k \ar[dr]^{1} & &\\
\bar{W} &&&&k  &k \ar[l]_{1} & &k \ar[ll]_{1} \ar@/_5pc/[llllu]|(.3){0} 
       \ar@/^5pc/[lllld]^{0} &k \ar[l]_{0}\\
&&&k  \ar[ur]_{1} &&
}
\end{equation}

\vspace{.2cm}

The commuting squares give the relations
\begin{equation} \label{diag.eq017}
a = c, \,\,\,\,  b = d, \,\,\,\,  c = d + e, \,\,\,\, l = 0, \,\,\,\, e = l, \,\,\,\,a = 0, \,\,\,\, b =0 .
\end{equation}
Hence, we obtain $Rep_{\!_{(Q,Q')}}(\bar{V},\bar{W}) \cong k$.\\
We  leave it to the reader to compute  $Rep_{\!_{(Q,Q')}}(\bar{W},\bar{V})$,  $Rep_{\!_{(Q',Q)}}(\underline{\bar{V}},\underline{\bar{W}})$ and $Rep_{\!_{(Q',Q)}}(\underline{\bar{W}},\underline{\bar{V}})$. 
\end{example} 

\vspace{.2cm}

\begin{definition} \label{def.3} 
Let $\bar{V} = (V,V',\psi)$, $\bar{W} = (W,W',\psi')$ be birepresentations of $(Q,Q')$. 
Write $ V = (V_i,\phi_{\alpha})$, $V' = (V'_{i'},\phi'_{\beta})$,  
$W = ((W_i,\mu_{\alpha})$, $W' = (W'_{i'},\mu'_{\beta})$. Then 
\begin{equation} \label{diag.eq018}
\bar{V} \oplus \bar{W} \,\,\ = \,\,\ ((V_i \oplus W_i,\begin{bmatrix}
       \phi_{\alpha} & 0            \\[0.3em]
       0 & \mu_{\alpha}         \\[0.3em] \end{bmatrix}), (V'_{i'} \oplus W'_{i'},  \begin{bmatrix}
       \phi'_{\alpha} & 0            \\[0.3em]
       0 & \mu'_{\alpha}         \\[0.3em] \end{bmatrix})
, \begin{bmatrix}
       \psi^{\alpha}_{\beta} & 0            \\[0.3em]
       0 & \psi'^{\alpha}_{\beta}         \\[0.3em] \end{bmatrix}),
\end{equation}
where $(V_i \oplus W_i,\begin{bmatrix}
       \phi_{\alpha} & 0            \\[0.3em]
       0 & \mu_{\alpha}         \\[0.3em] \end{bmatrix})$, $(V'_{i'} \oplus W'_{i'},  \begin{bmatrix}   \phi'_{\alpha} & 0   \\[0.3em]
       0 & \mu'_{\alpha}   \\[0.3em] \end{bmatrix})$ are the direct sums of $(V_i,\phi_{\alpha}),\, (W_i,\mu_{\alpha})$ and $(V'_{i'}, \phi'_{\beta}) ,\, (W'_{i'}, \mu'_{\beta})$ in $Rep_k(Q)$, $Rep_k(Q')$ respectively, is a  birepresentation of $(Q,Q')$ called the \textbf{direct sum} of  $\bar{V}, \,\  \bar{W}$ (in $Rep_{\!_{(Q,Q')}}$). \\

Similarly, direct sums in $Rep_{\!_{(Q_1,Q_2,...,Q_n)}}$ can be defined. \\
\end{definition}

\begin{example} \label{ex.3}

Consider the birepresentations in Example \ref{ex.2}. Then the direct sum $\bar{V} \oplus \bar{W}$ is the birepresentation 
\begin{equation} \label{diag.eq019}
\xymatrix{
&k^2  \ar[drr]^{\begin{bmatrix}
       1 & 0           \\[0.3em]
       0 & 0        \\[0.3em]
       0 & 1            \\[0.3em]
       \end{bmatrix}} & &\\
\bar{V}\oplus \bar{W}  &&&k^3 &&k^2 \ar[ll]_{\begin{bmatrix}
       1 & 0           \\[0.3em]
       1 & 0        \\[0.3em]
       0 & 1            \\[0.3em]
       \end{bmatrix}} & &&k^2 \ar[lll]_{\begin{bmatrix}
       1 & 0           \\[0.3em]
       0 & 1        \\[0.3em]
             \end{bmatrix}} \ar@/_7.8pc/[lllllllu]_(.57){1}  \ar@/^7.8pc/[llllllld]^(.57){1}
        &&k^2 \ar[ll]_{\begin{bmatrix}
       1 & 0           \\[0.3em]
       0 & 0        \\[0.3em]
             \end{bmatrix}} \\
&k^2  \ar[urr]_{\begin{bmatrix}
       0 & 0           \\[0.3em]
       1 & 0        \\[0.3em]
       0 & 1            \\[0.3em]
       \end{bmatrix}} &&&&&
}
\end{equation}\\

\end{example}

\begin{definition} \label{def.4} 
A birepresentation $\bar{V} \in Rep_{\!_{(Q,Q')}}$ is called \textbf{indecomposable} if $\bar{M} \neq 0$ and $\bar{M}$ cannot be written as a direct sum of two nonzero birepresentations, that is,
whenever $\bar{M} \cong \bar{L} \oplus \bar{N}$ with $ \bar{L}, \bar{N} \in Rep_{\!_{(Q,Q')}}$, then $\bar{L} = 0 $ or $\bar{N} = 0$.

\end{definition} 

\begin{example} \label{ex.4}
Consider the birepresentations in Example \ref{ex.2}. The birepresentation $\bar{V}$ is indecomposable, but the  birepresentation $\bar{W}$ is not. 
\end{example} 

\vspace{.2cm}

The above example also shows that if  $\bar{W} = ((W_i,\phi_{\alpha}),(W'_{i'},\phi'_{\beta}), (\psi^{\alpha}_{\beta}))$ is birepresentation of $(Q,Q')$ such that the representations $(W_i,\phi_{\alpha})$ and $(W'_{i'},\phi'_{\beta})$ are indecomposable in $ Rep_k(Q)$, $ Rep_k(Q')$ respectively, then $\bar{W}$ need not be indecomposable. The proof of the following proposition is straightforward.

\begin{proposition} \label{p.2} 
Let $\bar{V}  = ((V_i,\phi_{\alpha}),(V'_{i'},\phi'_{\beta}), (\psi^{\alpha}_{\beta})) \in Rep_{\!_{(Q,Q')}}$ be an indecomposable birepresentation, then the representations $(W_i,\phi_{\alpha})$ and $(W'_{i'},\phi'_{\beta})$ are indecomposable in $ Rep_k(Q)$, $ Rep_k(Q')$ respectively.
\end{proposition}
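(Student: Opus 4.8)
The natural strategy is to argue by contraposition. Suppose one of the underlying representations, say $V=(V_i,\phi_\alpha)$, is decomposable, and write $V=V_1\oplus V_2$ with $V_1,V_2\neq 0$; I would try to manufacture from this a nontrivial decomposition of $\bar V$. Since $Rep_{\!_{(Q,Q')}}$ is additive (it carries the biproduct of Definition \ref{def.3} together with a zero object), a nonzero object is decomposable exactly when its endomorphism ring contains a nontrivial idempotent. Thus the plan reduces to lifting the idempotent $e=(e_i)\in\End(V)$ attached to the splitting $V=V_1\oplus V_2$ to an idempotent $\bar e=(e,e')\in\End(\bar V)$; concretely, I must produce $e'=(e'_{i'})\in\End(V')$ making the relevant square of diagram (\ref{diag.eq010}) commute for $\bar f=\bar e$, i.e.
\begin{equation}
\psi^{\alpha}_{\beta}\circ e_{t(\alpha)}\;=\;e'_{s(\beta)}\circ\psi^{\alpha}_{\beta}\qquad\text{for every }(\alpha,\beta)\in\mathsf{Q}_{\!_{1}}\times\mathsf{Q}'_{\!_{1}}.
\end{equation}
If such an $e'$ exists with $\bar e\neq 0,\mathrm{id}$, then $\bar e$ splits $\bar V$ and contradicts indecomposability; a symmetric argument in the target variable would treat a decomposable $V'$.

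First I would test the two tautological candidates. Taking $e'=\mathrm{id}_{V'}$ satisfies the compatibility iff each $\psi^{\alpha}_{\beta}$ kills the summand $(\mathrm{id}-e)_{t(\alpha)}V_{t(\alpha)}$, while $e'=0$ works iff each $\psi^{\alpha}_{\beta}$ kills $e_{t(\alpha)}V_{t(\alpha)}$. More flexibly, the shear automorphism $\begin{bmatrix}\mathrm{id}&h\\ 0&\mathrm{id}\end{bmatrix}$ associated with a morphism $h\colon V_2\to V_1$ in $Rep_k(Q)$ lets one try to absorb the off-diagonal part of $\psi$, which succeeds precisely when $h$ can be chosen so that $\psi^{\alpha}_{\beta}\circ h_{t(\alpha)}$ matches the corresponding block of $\psi^{\alpha}_{\beta}$. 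Hence the entire argument hinges on whether the family $(\psi^{\alpha}_{\beta})$ can be made block-diagonal with respect to some splitting of $V$ aligned with some splitting of $V'$.

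This is exactly the step I expect to fail, and the failure looks genuine rather than technical: the maps $\psi^{\alpha}_{\beta}$ can glue otherwise independent indecomposable summands of $V$ through a common vertex of $V'$, rigidifying $\bar V$ even when $V$ splits. For a concrete obstruction take $Q\colon 1\xrightarrow{\alpha}2\xleftarrow{\gamma}3$ and $Q'\colon 1'\xrightarrow{\beta}2'$; let $V=V_1\oplus V_2$ with $V_1,V_2$ the incomparable indecomposables supported on $\{1,2\}$ and $\{2,3\}$, so that $\Hom(V_1,V_2)=\Hom(V_2,V_1)=0$ and $V_{t(\alpha)}=V_{t(\gamma)}=k^2$; let $V'$ be the indecomposable $k\xrightarrow{1}k$; and put $\psi^{\alpha}_{\beta}=[\,1\ \ 0\,]$ and $\psi^{\gamma}_{\beta}=[\,0\ \ 1\,]$. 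Since $V_1,V_2$ are bricks with no maps between them, $\End(V)\cong k\times k$ acts diagonally on $V_{t(\alpha)}=k^2$, and $\End(V')\cong k$ is the scalars $\mu$; the compatibility equation then forces both diagonal scalars to equal $\mu$, whence $\End(\bar V)\cong k$ and $\bar V$ is indecomposable, while $V=V_1\oplus V_2$ is not. I therefore do not expect the contrapositive to go through, and in fact the proposition as stated appears to be false: neither implication between indecomposability of $\bar V$ and the joint indecomposability of $V$ and $V'$ holds, the remark preceding the statement already refuting the converse. A correct version would have to restrict the coupling maps $(\psi^{\alpha}_{\beta})$ so that they cannot link distinct summands; without such a restriction the naive lift $e'=\mathrm{id}$ or $e'=0$ is unavailable and the conclusion fails.
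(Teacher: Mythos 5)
Your conclusion is correct, and there is nothing in the paper for your attempt to have missed: the proof environment following Proposition \ref{p.2} is empty (the claim is merely asserted to be ``straightforward''), and your counterexample shows that no proof can exist, because the proposition is false as stated. I checked your example against the paper's definitions and it is sound. Take $Q\colon 1\xrightarrow{\alpha}2\xleftarrow{\gamma}3$ and $Q'\colon 1'\xrightarrow{\beta}2'$; let $V_1$ be the indecomposable with $k$ at vertices $1,2$, zero at vertex $3$ and $\phi_{\alpha}=1$, let $V_2$ be the indecomposable with $k$ at vertices $2,3$, zero at vertex $1$ and $\phi_{\gamma}=1$, and set $V=V_1\oplus V_2$, $V'=(k\xrightarrow{1}k)$, $\psi^{\alpha}_{\beta}=\begin{bmatrix}1&0\end{bmatrix}$, $\psi^{\gamma}_{\beta}=\begin{bmatrix}0&1\end{bmatrix}$. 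This is legitimate data: $t(\alpha)=t(\gamma)=2$, and Definition \ref{def.1} imposes no condition at all on the linking maps. Since $\Hom(V_1,V_2)=\Hom(V_2,V_1)=0$ and $\End(V_1)=\End(V_2)=k$, every endomorphism of $V$ acts on the vertex-$2$ space $k^{2}$ as $\begin{bmatrix}a&0\\0&b\end{bmatrix}$, while $\End(V')\cong k$ acts by a scalar $\mu$. The compatibility square of Definition \ref{def.2}, imposed once for each of the two pairs $(\alpha,\beta)$ and $(\gamma,\beta)$, reads
\begin{equation*}
\mu\begin{bmatrix}1&0\end{bmatrix}=\begin{bmatrix}1&0\end{bmatrix}\begin{bmatrix}a&0\\0&b\end{bmatrix}=\begin{bmatrix}a&0\end{bmatrix},
\qquad
\mu\begin{bmatrix}0&1\end{bmatrix}=\begin{bmatrix}0&1\end{bmatrix}\begin{bmatrix}a&0\\0&b\end{bmatrix}=\begin{bmatrix}0&b\end{bmatrix},
\end{equation*}
forcing $a=\mu=b$, hence $\End(\bar V)\cong k$. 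Since the inclusions and projections of a direct sum as in Definition \ref{def.3} are morphisms of birepresentations (its linking maps are block diagonal), any isomorphism $\bar V\cong\bar L\oplus\bar N$ with $\bar L,\bar N\neq 0$ would transport to nonzero orthogonal idempotents in $\End(\bar V)$, which a field does not possess. So $\bar V$ is indecomposable in $Rep_{\!_{(Q,Q')}}$ while $V$ decomposes in $Rep_k(Q)$. Note that only the easy direction of the idempotent criterion is used here (decomposition yields idempotents), so no question of idempotent splitting arises.

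Two further remarks. First, your diagnosis of \emph{why} every proof strategy fails is also the right one: the essential feature of the example is that two arrows of $Q$ share the target $2$, so two independent linking maps constrain the same endomorphism component $e_{2}$; with a single pair of arrows the obvious lifts ($e'=0$, $e'=\mathrm{id}$, or a shear-corrected idempotent) can typically be arranged, which is why a counterexample needs this ``gluing through a common vertex.'' Second, combined with the example that precedes Proposition \ref{p.2} in the paper (which shows that $V,V'$ indecomposable does not imply $\bar V$ indecomposable), your example shows that indecomposability of $\bar V$ and joint indecomposability of $(V,V')$ are logically independent: neither implies the other. Any repaired statement would have to restrict the linking data $(\psi^{\alpha}_{\beta})$, for instance to families that are simultaneously block-diagonalizable with respect to decompositions of $V$ and $V'$, which essentially assumes the desired conclusion.
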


\begin{proof}

\end{proof}

\vspace{.2cm}

\section{\textbf{Construction For Kernels and Cokernels in $Rep_{\!_{(Q,Q')}}$}}\label{s.construction.kernels.cokernels} 

Following \cite[p. 49]{Wisbauer}, let $\Cc$ be a category with zero object and $f : A \rightarrow B$ a morphism in $\Cc$.
\begin{enumerate}[label=(\roman*)]
\item A morphism $i : K \rightarrow A$ is called a \textbf{kernel} of $f$ if $if = 0$ and, for every morphism $g : D \rightarrow A$ with $gf = 0$, there is a unique morphism $h : D \rightarrow K$ with $hi = g$, i.e. the  diagram is commutative.
\begin{equation} \label{diag.eq019.1}
\xymatrix{
& & D \ar@{-->}[ddll]_{h} \ar[dd]^{g} && \\
&&&&\\
K \ar[rr]_{i} && A \ar[rr]_{f} && B
} 
\end{equation}\\
\item A morphism $p : B \rightarrow C$ is called a \textbf{cokernel} of $f$ if $fp = 0$ and, for every $g : B \rightarrow D$ with $fg = 0$, there is a unique morphism $h : C \rightarrow D$ with $ph = g$, i.e. the  diagram is commutative.
\begin{equation} \label{diag.eq019.2}
\xymatrix{
A \ar[rr]_{f} && B \ar[dd]_{g} \ar[rr]_{p} && C \ar@{-->}[ddll]^{h} \\
&&&&\\
& & D & & 
} 
\end{equation}\\
\end{enumerate}

\begin{remark} \cite[p. 50]{Wisbauer} \label{r.2}
Let $R$ be a  ring with unity and $R-Mod$ (resp $Mod-R$) be category of left $R$-modules (resp right $R$-modules), and let  $f : M \rightarrow N$ be a homomorphism in $R-Mod$ (resp $Mod-R$). Then
\begin{enumerate}[label=(\roman*)]
\item The inclusion $i : Ker \,\ f \rightarrow M $ is a kernel of $f$ in $R-Mod$ (resp $Mod-R$) because if $g : L \rightarrow M $ is given with $fg = 0$, then $g(L) \subseteq Ker \,\ f$. Clearly, we can define   $g' : L \rightarrow Ker \,\ f, \,\ x \mapsto g(x)$ to be the unique morphiism with $ig'=g$.
\item The projection $p : N \rightarrow Coker \,\ f = N/f(M)$ is a cokernel of $f$ in $R-Mod$ (resp $Mod-R$) because if $h : N \rightarrow L $ is given with $hf = 0$, then $ Im \,\ f \subseteq Ker \,\ h$. Thus, by using the First Isomorphism Theorem, we can define  $h' : Coker \,\ f \rightarrow L, \,\ n + f(M) \mapsto h(n)$ to be the unique morphiism with $h'p=h$.\\
\end{enumerate}
\end{remark}

Let $\bar{f}=(f,f'): \bar{V} = (V,V',\psi) \rightarrow \bar{W}  = (W,W',\psi')$ be  a morphism in $ Rep_{\!_{(Q,Q')}}$.  From  Proposition (\ref{p.1}), the kernels of $f,f'$ exist in  $ Rep_k(Q)$, $ Rep_k(Q')$ respectively.  Let $(\kappa,\zeta), \,\ (\kappa',\zeta')$ be the kernels of $f,f'$ in  $ Rep_k(Q)$, $ Rep_k(Q')$ respectively. Write  $\kappa=(\kappa_i,\chi_{\alpha}),\,\ \zeta = (\zeta_{\alpha}), \,\ \kappa'=(\kappa'_{i'},\nu_{\beta}),\,\ \zeta' = (\zeta'_{\beta})$ and consider the following  diagram.

\begin{equation} \label{diag.eq020}
\xymatrix{
\kappa_{s(\alpha)} \ar[rr]^{\chi_{\alpha}} \ar[dr]_{\zeta_{s(\alpha)}} && \kappa_{t(\alpha)} \ar@{-->}[ddd]|!{[dd];[d]}\hole|!{[ddd];[dd]}\hole^(.5){\xi^{\alpha}_{\beta}}  \ar[dr]^{\zeta_{t(\alpha)}}\\
& V_{s(\alpha)} \ar[rr]|(.3){\phi_{\alpha}}  \ar[dr]_{f_{s(\alpha)}}
&& V_{t(\alpha)} \ar[ddd]|!{[dd];[d]}\hole^(.5){\psi^{\alpha}_{\beta}}  \ar[dr]^{f_{t(\alpha)}}\\
&& W_{s(\alpha)} \ar[rr]|(.3){\phi'_{\alpha}} && W_{t(\alpha)} \ar[ddd]|(.5){\psi'^{\alpha}_{\beta}} \\
&& \kappa'_{s(\beta)} \ar[rr]|\hole|(.65){\nu_{\beta}} \ar[dr]_{\zeta'_{s(\beta)}} &&
\kappa'_{t(\beta}  \ar[dr]^{\zeta'_{t(\beta)}} \\
&&&  V'_{s(\beta)}  \ar[dr]_{f'_{s(\beta)}} \ar[rr]|\hole^(.3){\mu_{\beta}} &&  V'_{t(\beta)}\ar[dr]^{f'_{t(\beta)}}\\
&&&&  W'_{s(\beta)}  \ar[rr]_(.5){\mu'_{\beta}} &&  W'_{t(\beta)}
}
\end{equation}

\vspace{.2cm}

For all $ \alpha \in \mathsf{Q}_{\!_{1}}, \,\ \beta \in \mathsf{Q}'_{\!_{1}}$, define $\xi^{\alpha}_{\beta}: \kappa_{t(\alpha)} \rightarrow \kappa'_{s(\alpha)}$ to be the restriction of $\psi^{\alpha}_{\beta}$. Then $\xi^{\alpha}_{\beta}$ is well defined for all $ \alpha \in \mathsf{Q}_{\!_{1}}, \,\ \beta \in \mathsf{Q}'_{\!_{1}}$ since for all $x \in \kappa_{t(\alpha)}$, we have $f'_{s(\beta)} \psi^{\alpha}_{\beta}(x) = \psi'^{\alpha}_{\beta} f_{t(\alpha)}(x) = 0$. Thus,  $\psi^{\alpha}_{\beta}(x) \in \kappa'_{s(\alpha)}$, and it follows that $\xi^{\alpha}_{\beta}$ is well defined. \\

Let $\bar{\kappa} = (\kappa,\kappa',\xi)$ and $\bar{\zeta} = (\zeta,\zeta')$. Then $\bar{\kappa} \in  Rep_{\!_{(Q,Q')}}$, and we have the following proposition. 

\begin{proposition} \label{p.3.ker} 
The pair $(\bar{\kappa},\bar{\zeta})$ is the kernel of $\bar{f}$ in $ Rep_{\!_{(Q,Q')}}$. 
\end{proposition}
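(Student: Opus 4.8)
The plan is to verify directly that $(\bar{\kappa},\bar{\zeta})$ satisfies the universal property of the kernel of $\bar{f}$ in $Rep_{\!_{(Q,Q')}}$, reducing every assertion to the componentwise universal properties of $\kappa = \Ker f$ and $\kappa' = \Ker f'$, which exist in $Rep_k(Q)$ and $Rep_k(Q')$ by Proposition \ref{p.1}. Since $\bar{\kappa} \in Rep_{\!_{(Q,Q')}}$ and the well-definedness of $\xi$ have already been established, the argument splits into two routine checks (that $\bar{\zeta}$ is a zero-composing morphism) and one substantive check (that the induced factoring map is genuinely a morphism of birepresentations).

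First I would confirm that $\bar{\zeta} = (\zeta,\zeta')$ is a morphism $\bar{\kappa} \to \bar{V}$ in $Rep_{\!_{(Q,Q')}}$. Its two components are morphisms in $Rep_k(Q)$ and $Rep_k(Q')$ by construction, and the compatibility square $\psi^{\alpha}_{\beta}\,\zeta_{t(\alpha)} = \zeta'_{s(\beta)}\,\xi^{\alpha}_{\beta}$ holds immediately, because $\xi^{\alpha}_{\beta}$ is by definition the restriction of $\psi^{\alpha}_{\beta}$ and $\zeta,\zeta'$ are the kernel inclusions. Moreover $\bar{\zeta}\bar{f} = (\zeta f, \zeta' f') = (0,0) = 0$, since $\zeta f = 0$ and $\zeta' f' = 0$ by the defining property of the two component kernels.

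Next, for the universal property, let $\bar{g} = (g,g') : \bar{D} = (D,D',\tau) \to \bar{V}$ be any morphism in $Rep_{\!_{(Q,Q')}}$ with $\bar{g}\bar{f} = 0$. Componentwise this reads $gf = 0$ and $g'f' = 0$, so by the universal properties of $\kappa$ and $\kappa'$ there are unique $h : D \to \kappa$ in $Rep_k(Q)$ and $h' : D' \to \kappa'$ in $Rep_k(Q')$ with $h\zeta = g$ and $h'\zeta' = g'$. Setting $\bar{h} = (h,h')$, the identity $\bar{h}\bar{\zeta} = (h\zeta, h'\zeta') = (g,g') = \bar{g}$ is then automatic, and uniqueness of $\bar{h}$ follows at once from the uniqueness of the components $h$ and $h'$.

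The one step that requires genuine work, and which I expect to be the main obstacle, is showing that $\bar{h} = (h,h')$ is a morphism of birepresentations $\bar{D} \to \bar{\kappa}$, i.e. that it respects the gluing data: $\xi^{\alpha}_{\beta}\,h_{t(\alpha)} = h'_{s(\beta)}\,\tau^{\alpha}_{\beta}$. The subtlety is that $h$ and $h'$ were produced purely as arrows of the two separate component categories, so nothing a priori forces them to be compatible with the maps $\xi$ and $\tau$. I would prove the identity by precomposing with the inclusion $\zeta'_{s(\beta)}$, which is injective: using the compatibility of $\bar{\zeta}$ together with $\zeta_{t(\alpha)}h_{t(\alpha)} = g_{t(\alpha)}$ gives $\zeta'_{s(\beta)}\,\xi^{\alpha}_{\beta}\,h_{t(\alpha)} = \psi^{\alpha}_{\beta}\,\zeta_{t(\alpha)}\,h_{t(\alpha)} = \psi^{\alpha}_{\beta}\,g_{t(\alpha)}$, while $\zeta'_{s(\beta)}\,h'_{s(\beta)}\,\tau^{\alpha}_{\beta} = g'_{s(\beta)}\,\tau^{\alpha}_{\beta}$. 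These two composites coincide precisely because $\bar{g}$ is itself a morphism of birepresentations, so $\psi^{\alpha}_{\beta}\,g_{t(\alpha)} = g'_{s(\beta)}\,\tau^{\alpha}_{\beta}$. Injectivity of $\zeta'_{s(\beta)}$ then yields $\xi^{\alpha}_{\beta}\,h_{t(\alpha)} = h'_{s(\beta)}\,\tau^{\alpha}_{\beta}$, completing the verification and hence the proof.
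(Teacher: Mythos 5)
Your proof is correct and follows essentially the same route as the paper: reduce everything to the componentwise kernels $(\kappa,\zeta)$, $(\kappa',\zeta')$ in $Rep_k(Q)$ and $Rep_k(Q')$, and then do the one substantive check that the induced factoring map is compatible with the gluing maps $\xi$ and the gluing data of the test object. The only difference is cosmetic: where the paper verifies that compatibility square by an element chase (using that the factoring maps are corestrictions of $\lambda,\lambda'$ and that $\xi^{\alpha}_{\beta}$ restricts $\psi^{\alpha}_{\beta}$), you cancel the injective kernel inclusion $\zeta'_{s(\beta)}$ on the left --- the same underlying fact, phrased as monomorphism cancellation.
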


\begin{proof}
Let $ \bar{\lambda} = (\lambda,\lambda'): \bar{N} = (N,N',\Psi) \rightarrow \bar{V}  = (V,V',\psi)$ be a morphism in  $ Rep_{\!_{(Q,Q')}}$ with $ \bar{f} \bar{\lambda} = \bar{0} $, where $\bar{0} $ is the zero object in $ Rep_{\!_{(Q,Q')}}$. Consider the following diagram. \\

\begin{equation} \label{diag.eq021}
\xymatrix{
& \kappa_{s(\alpha)}  \ar[rr]^{\chi_{\alpha}} \ar[dr]_{\zeta_{s(\alpha)}} && \kappa_{t(\alpha)} \ar[ddd]|!{[dd];[d]}\hole|!{[ddd];[dd]}\hole^(.5){\xi^{\alpha}_{\beta}}  \ar[dr]^{\zeta_{t(\alpha)}}\\
 & & V_{s(\alpha)}  \ar[rr]|(.3){\phi_{\alpha}}  \ar[dr]_{f_{s(\alpha)}}
&& V_{t(\alpha)}   \ar[ddd]|!{[dd];[d]}\hole^(.5){\psi^{\alpha}_{\beta}}  \ar[dr]^{f_{t(\alpha)}}\\
N_{s(\alpha)} \ar@/^2pc/[uur]|(.5){\tau_{s(\alpha)}}   \ar[urr]^(.5){\lambda_{s(\alpha)}} \ar[d]^{\delta_{\alpha}} & && W_{s(\alpha)}   \ar[rr]|(.3){\phi'_{\alpha}} && W_{t(\alpha)}  \ar[ddd]|(.5){\psi'^{\alpha}_{\beta}} \\
N_{t(\alpha)} \ar@/^10.6pc/[uuurrr]|(.6){\tau_{t(\alpha)}}
\ar@/_2.7pc/[uurrrr]|(.5){\lambda_{t(\alpha)}} \ar[d]_(.4){\Psi^{\alpha}_{\beta}} &&& \kappa'_{s(\beta)} \ar[rr]|\hole|(.65){\rho_{\beta}} \ar[dr]_{\zeta'_{s(\beta)}} &&
\kappa'_{t(\beta}  \ar[dr]^{\zeta'_{t(\beta)}} \\
N'_{s(\beta)} \ar@/_2pc/[urrr]|(.5){\tau'_{s(\beta)}}  \ar@/_2pc/[rrrr]|(.7){\lambda'_{s(\beta)}}  \ar[d]_{\delta'_{\beta}}  &&&&  V'_{s(\beta)} \ar[dr]_{f'_{s(\beta)}} \ar[rr]|\hole|(.7){\mu_{\beta}} &&  V'_{t(\beta)}  \ar[dr]^{f'_{t(\beta)}}\\
N'_{t(\beta)} \ar@/_7.3pc/[urrrrrr]|(.5){\lambda'_{t(\beta)}} 
\ar@/_4.6pc/[uurrrrr]|(.5){\tau'_{t(\beta)}} &&&&&  W'_{s(\beta)}  \ar[rr]_(.5){\mu'_{\beta}} &&  W'_{t(\beta)} 
}
\end{equation}\\

Since $(\kappa_i,\chi_{\alpha}),(\kappa'_{i'},\nu_{\beta})$ are the kernels of $f,f'$ in  $ Rep_k(Q)$, $ Rep_k(Q')$ respectively, there exist unique morphisms $\tau : N \rightarrow \kappa, \,\, \tau' : N' \rightarrow \kappa'$ in  $ Rep_k(Q)$, $ Rep_k(Q')$ respectively making their respective subdiagrams commutative. So all we need is to show that $\bar{\tau}= (\tau,\tau'): \bar{N} \rightarrow \bar{\kappa}$ is a  morphism in  $ Rep_{\!_{(Q,Q')}}$. To check this, the constructions of the kernels of $f,f'$ in  $ Rep_k(Q)$, $ Rep_k(Q')$   respectively  and Remark  (\ref{r.2}) imply that $\tau (\underline{x}) = \lambda(\underline{x})$ and  $\tau'(\underline{x}') = \lambda'(\underline{x}')$ for all $\underline{x} \in N$ and $\underline{x}' \in N'$ respectively. For any $x \in N_{t(\alpha)}$,  $ \alpha \in \mathsf{Q}_{\!_{1}}, \,\ \beta \in \mathsf{Q}'_{\!_{1}}$, we have \\

\begin{tabular}{lllll}
$\tau'_{s(\beta)} \Psi^{\alpha}_{\beta}(x)$  &  $= \tau'_{s(\beta)}( \Psi^{\alpha}_{\beta}(x)) $\\
  & $=  \lambda'_{s(\beta)} (\Psi^{\alpha}_{\beta}(x)) $\\
  & (since $ \lambda'(\underline{x}') = \tau' (\underline{x}')$, for all $\underline{x}' \in N'$)\\
 & $=  \lambda'_{s(\beta)} \Psi^{\alpha}_{\beta} (x) $\\
 & $=  \psi^{\alpha}_{\beta} \lambda_{t(\alpha)}  (x) $\\
&  (since $\bar{\lambda}$ is a morphism in $ Rep_{\!_{(Q,Q')}}$)\\
  \end{tabular}\\
  
  \begin{tabular}{lllll}
\,\,\,\,\,\,\,\,\,\,\,\,\,\,\,\,\,\,\,\,\,\,\,\,\,\,\,\, & $=  \psi^{\alpha}_{\beta} \tau_{t(\alpha)}  (x) $\\
 & (since $ \lambda(\underline{x}) = \tau (\underline{x})$, for all $\underline{x} \in N$)\\
 & $=  \psi^{\alpha}_{\beta} (\tau_{t(\alpha)}  (x)) $\\
 & $=  \xi^{\alpha}_{\beta} (\tau_{t(\alpha)} (x)) $\\
  &  (by the definition of $\xi^{\alpha}_{\beta}$)\\
 & $=  \xi^{\alpha}_{\beta} \tau_{t(\alpha)} (x) $\\
  \end{tabular}\\
  
  \vspace{.2cm}
  
Therefore,  $\bar{\tau}= (\tau,\tau'): \bar{N} \rightarrow \bar{\kappa}$ is a  morphism in  $ Rep_{\!_{(Q,Q')}}$, and thus $(\bar{\kappa},\bar{\zeta})$ is the kernel of $\bar{f}$ in $ Rep_{\!_{(Q,Q')}}$.\\
 
\end{proof}

\vspace{.2cm}

Using induction on $n$ and the same procedure used above, we can show that kernels exist in the category $Rep_{\!_{(Q_1,Q_2,...,Q_n)}}$, and they can similarly be constructed.\\ Explicitly, let $\bar{V} = (V^{(1)},V^{(2)},...,V^{(n)},\psi_{\!_{1}}, \psi_{\!_{2}}, ..., \psi_{\!_{n-1}})$, $\bar{W} = (W^{(1)},W^{(2)},...,W^{(n)},\psi'_{\!_{1}}, \psi'_{\!_{2}}, ..., \psi'_{\!_{n-1}})$ be $n$-representations of $(Q_1, \,\, Q_2, \,\,... \,\, ,  Q_n)$, and let $\underline{f}: \bar{V} \rightarrow \bar{W}$ be a morphism of $n$-representations, where $\underline{f}=(f^{\!^{(1)}}, f^{\!^{(2)}}, ..., f^{\!^{(n)}})$. For any  $m \in \{2,\,\,..., \,\,n\}$, write 
\begin{center}
•$f^{\!^{(m)}} = (f^{\!^{(m)}}_{i^{(m)}}):  (V_{{i^{(m)}}},\phi^{{i^{(m)}}}_{\gamma^{(m)}}) \rightarrow  (W_{{i^{(m)}}},\mu^{{i^{(m)}}}_{\gamma^{(m)}})$,
\end{center}

By induction, the kernel of $(f^{\!^{(1)}}, f^{\!^{(2)}}, ..., f^{\!^{(n-1)}})$ in $Rep_{\!_{(Q_1,Q_2,...,Q_{n-1})}}$ exists in $Rep_{\!_{(Q_1,Q_2,...,Q_{n-1})}}$. Let 
\begin{center}
•$((\kappa^{(1)},\kappa^{(2)},...,\kappa^{(n-1)},\xi_{\!_{1}}, \xi_{\!_{2}}, ..., \xi_{\!_{n-2}}),(\zeta^{\!^{(1)}}, \zeta^{\!^{(2)}}, ..., \zeta^{\!^{(n-1)}}))$
\end{center}
 be the kernel of $(f^{\!^{(1)}}, f^{\!^{(2)}}, ..., f^{\!^{(n-1)}})$ in $Rep_{\!_{(Q_1,Q_2,...,Q_{n-1})}}$. 

For all $(\gamma^{(n-1)},\gamma^{(n)}) \in \mathsf{Q}^{(n-1)}_{\!_{1}} \times  \mathsf{Q}^{(n)}_{\!_{1}}$, define 
\begin{center}
•$\xi_{\!_{n}\gamma^{(n-1)}}^{\gamma^{(n)}}: \kappa^{(n-1)}_{t^{(n-1)}(\gamma^{(n-1)})} \rightarrow \kappa^{(n)}_{s^{(n)}(\gamma^{(n)})} $
\end{center}
to be the restriction of $\xi_{\!_{n-1}\gamma^{(n-2)}}^{\gamma^{(n-1)}}$. Then  $\xi_{\!_{n}\gamma^{(n)}}^{\gamma^{(n-1)}}$ is well defined for all $(\gamma^{(n-1)},\gamma^{(n)}) \in \mathsf{Q}^{(n-1)}_{\!_{1}} \times  \mathsf{Q}^{(n)}_{\!_{1}}$ by using similar argument used for the case $n=2$. \\
Let $\underline{\kappa} = (\kappa^{(1)},\kappa^{(2)},...,\kappa^{(n)},\xi_{\!_{1}}, \xi_{\!_{2}}, ..., \xi_{\!_{n-1}})$,  $\underline{\zeta} = (\zeta^{\!^{(1)}}, \zeta^{\!^{(2)}}, ..., \zeta^{\!^{(n)}})$ and $\underline{f} = (f^{\!^{(1)}}, f^{\!^{(2)}}, ..., f^{\!^{(n)}})$, where $(\kappa^{(n)},\zeta^{\!^{(n)}})$ is the kernel of $f^{\!^{(n)}}$\,\,. Using similar argument used in Proposition (\ref{p.3.ker}) gives the following proposition. \\

\begin{proposition} \label{p.3.1.ker.n.rep} 
The pair $(\underline{\kappa}, \underline{\zeta}))$ is the kernel of $\underline{f}$ in $Rep_{\!_{(Q_1,Q_2,...,Q_{n})}}$.  \\
\end{proposition}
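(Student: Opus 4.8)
The plan is to argue by induction on $n$. The base case $n=2$ is precisely Proposition \ref{p.3.ker}, so I assume $n>2$ and that the statement holds for $(n-1)$-representations. By that inductive hypothesis the pair $((\kappa^{(1)},\dots,\kappa^{(n-1)},\xi_{\!_{1}},\dots,\xi_{\!_{n-2}}),(\zeta^{\!^{(1)}},\dots,\zeta^{\!^{(n-1)}}))$ is a kernel of $(f^{\!^{(1)}},\dots,f^{\!^{(n-1)}})$ in $Rep_{\!_{(Q_1,\dots,Q_{n-1})}}$, while $(\kappa^{(n)},\zeta^{\!^{(n)}})$ is a kernel of $f^{\!^{(n)}}$ in $Rep_k(Q_n)$ by Proposition \ref{p.1}. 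Recall that $\xi_{\!_{n}}$ was defined as the restriction of the corresponding compatibility map of $\bar{V}$ along the inclusions $\zeta^{\!^{(n-1)}}$ and $\zeta^{\!^{(n)}}$, exactly as in the case $n=2$.

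First I would verify that $\underline{\zeta}=(\zeta^{\!^{(1)}},\dots,\zeta^{\!^{(n)}})$ is indeed a morphism in $Rep_{\!_{(Q_1,\dots,Q_n)}}$ and that $\underline{f}\,\underline{\zeta}=\underline{0}$. Its first $n-1$ components assemble into a morphism by the inductive hypothesis, and $\zeta^{\!^{(n)}}$ is a morphism in $Rep_k(Q_n)$; the only genuinely new datum is the compatibility square (\ref{diag.eq010.1}) for $m=n$, which commutes by the very definition of $\xi_{\!_{n}}$ as a restriction of the compatibility map of $\bar{V}$. The relation $\underline{f}\,\underline{\zeta}=\underline{0}$ then holds componentwise, since $f^{\!^{(m)}}\zeta^{\!^{(m)}}=0$ for every $m$.

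For the universal property, I take an arbitrary morphism $\underline{\lambda}=(\lambda^{\!^{(1)}},\dots,\lambda^{\!^{(n)}}):\bar{N}\to\bar{V}$ of $n$-representations with $\underline{f}\,\underline{\lambda}=\underline{0}$. Applying the inductive hypothesis to the truncation $(\lambda^{\!^{(1)}},\dots,\lambda^{\!^{(n-1)}})$ yields a unique morphism $(\tau^{\!^{(1)}},\dots,\tau^{\!^{(n-1)}})$ into the $(n-1)$-kernel, and the kernel property of $\kappa^{(n)}$ in $Rep_k(Q_n)$ yields a unique $\tau^{\!^{(n)}}$ with $\zeta^{\!^{(n)}}\tau^{\!^{(n)}}=\lambda^{\!^{(n)}}$. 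Setting $\underline{\tau}=(\tau^{\!^{(1)}},\dots,\tau^{\!^{(n)}})$, uniqueness of $\underline{\tau}$ is automatic because every component is forced, so the entire content reduces to checking that $\underline{\tau}$ is an honest morphism of $n$-representations.

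This last verification is the only substantive step, and hence the main obstacle. The compatibility squares (\ref{diag.eq010.1}) for $m<n$ commute by the inductive hypothesis, so it suffices to treat $m=n$: one must show the square with vertical maps $\tau^{\!^{(n-1)}}$, $\tau^{\!^{(n)}}$ and horizontal maps $\Psi_{\!_{n}}$ (the compatibility map of $\bar{N}$) and $\xi_{\!_{n}}$ commutes. Here I would repeat verbatim the elementwise computation of Proposition \ref{p.3.ker}: since the kernels in the underlying module categories are realized as inclusions (cf. Remark \ref{r.2}), the maps $\tau^{\!^{(n-1)}}$, $\tau^{\!^{(n)}}$ agree with $\lambda^{\!^{(n-1)}}$, $\lambda^{\!^{(n)}}$ on elements; combining this with the fact that $\underline{\lambda}$ is a morphism of $n$-representations and that $\xi_{\!_{n}}$ is the restriction of the compatibility map of $\bar{V}$, one obtains $\tau^{\!^{(n)}}_{s(\gamma^{(n)})}\,\Psi_{\!_{n}}=\xi_{\!_{n}}\,\tau^{\!^{(n-1)}}_{t(\gamma^{(n-1)})}$ on every element of $N^{(n-1)}_{t(\gamma^{(n-1)})}$. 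This closes the induction and establishes that $(\underline{\kappa},\underline{\zeta})$ is the kernel of $\underline{f}$ in $Rep_{\!_{(Q_1,\dots,Q_n)}}$.
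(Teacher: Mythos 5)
Your proposal is correct and follows essentially the same route the paper intends: induction on $n$ with Proposition \ref{p.3.ker} as the base case, producing $\underline{\tau}$ componentwise from the inductive hypothesis together with the kernel of $f^{\!^{(n)}}$ in $Rep_k(Q_n)$, and verifying the one new compatibility square by the same elementwise computation (note you also silently correct the paper's slip, since $\xi_{\!_{n}}$ must be the restriction of the compatibility map $\psi_{\!_{n}}$ of $\bar{V}$, not of $\xi_{\!_{n-1}}$). Nothing further is needed.
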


\begin{proof}

\end{proof}

\vspace{.4cm}

Let $\bar{f}=(f,f'): \bar{V} = (V,V',\psi) \rightarrow \bar{W}  = (W,W',\psi')$ be  a morphism in $ Rep_{\!_{(Q,Q')}}$.  From Proposition (\ref{p.1}), the cokernels of $f,f'$ exist in  $ Rep_k(Q)$, $ Rep_k(Q')$ respectively. 
Let $(K,\eta), \,\ (K',\eta')$ be the cokernels of $f,f'$ in  $ Rep_k(Q)$, $ Rep_k(Q')$ respectively. Write  $K=(K_i,\phi'_{\alpha}),\,\ \eta = (\eta_{\alpha}), \,\ K'=(K_i,\mu'_{\beta}),\,\ \eta' = (\eta'_{\beta})$ and Consider the following  diagram.\\

\begin{equation} \label{diag.eq022}
\xymatrix{
V_{s(\alpha)} \ar[rr]^{\phi_{\alpha}} \ar[dr]_{f_{s(\alpha)}} && V_{t(\alpha)} \ar[ddd]|!{[dd];[d]}\hole|!{[ddd];[dd]}\hole^(.5){\psi^{\alpha}_{\beta}}  \ar[dr]^{f_{t(\alpha)}}\\
& W_{s(\alpha)} \ar[rr]|(.3){\phi'_{\alpha}}  \ar[dr]_{\eta_{s(\alpha)}}
&& W_{t(\alpha)} \ar[ddd]|!{[dd];[d]}\hole^(.5){\psi'^{\alpha}_{\beta}}  \ar[dr]^{\eta_{t(\alpha)}}\\
&& K_{s(\alpha)} \ar[rr]|(.3){\phi''_{\alpha}} && K_{t(\alpha)} \ar@{-->}[ddd]|(.5){\psi''^{\alpha}_{\beta}} \\
&& V'_{s(\beta)} \ar[rr]|\hole|(.65){\mu_{\beta}} \ar[dr]_{f'_{s(\beta)}} &&
V'_{t(\beta}  \ar[dr]^{f'_{t(\beta)}} \\
&&&  W'_{s(\beta)}  \ar[dr]_{\eta'_{s(\beta)}} \ar[rr]|\hole^(.3){\mu'_{\beta}} &&  W'_{t(\beta)}\ar[dr]^{\eta'_{t(\beta)}}\\
&&&&  K'_{s(\beta)}  \ar[rr]_(.5){\mu''_{\beta}} &&  K'_{t(\beta)}
}
\end{equation}

For all $ \alpha \in \mathsf{Q}_{\!_{1}}, \,\ \beta \in \mathsf{Q}'_{\!_{1}}$, define the $k$-linear map $\psi''^{\alpha}_{\beta}: K_{t(\alpha)} \rightarrow K'_{s(\alpha)}$ by $\psi''^{\alpha}_{\beta}(w + f_{t(\alpha)}(V_{t(\alpha)})) = \psi'^{\alpha}_{\beta} (w) + f'_{s(\beta)}(V'_{s(\beta)})$ for all $w \in W_{t(\alpha)}$. Then $\psi''^{\alpha}_{\beta}$ is well defined for all $ \alpha \in \mathsf{Q}_{\!_{1}}, \,\ \beta \in \mathsf{Q}'_{\!_{1}}$ since for all $a, b   \in W_{t(\alpha)}$ with  $a + f_{t(\alpha)} (V_{t(\alpha)}) = b + f_{t(\alpha)} (V_{t(\alpha)}) $, we have $a - b \in  f_{t(\alpha)} (V_{t(\alpha)}) $. Thus $\psi'^{\alpha}_{\beta} (a) - \psi'^{\alpha}_{\beta} (b)\,\  = \,\  \psi'^{\alpha}_{\beta} (a - b)  \in  \psi'^{\alpha}_{\beta} f_{t(\alpha)} (V_{t(\alpha)}) \,\  = \,\ f'_{s(\beta)} \psi^{\alpha}_{\beta} (V_{t(\alpha)}) \subseteq  \,\ f'_{s(\beta)} (V'_{s(\alpha)})$. It follows that $\psi'^{\alpha}_{\beta} (a) + f'_{s(\beta)}(V'_{s(\alpha)}) = \psi'^{\alpha}_{\beta} (b) + f'_{s(\beta)}(V'_{s(\alpha)})$, and hence $\psi''^{\alpha}_{\beta}$ is well defined. \\

Let $\bar{K} = (K,K',\psi'')$ and $\bar{\eta} = (\eta,\eta')$. Then $\bar{K} \in  Rep_{\!_{(Q,Q')}}$ and We have the following proposition. 

\begin{proposition} \label{p.4.coker} 
The pair $(\bar{K},\bar{\eta})$ is the cokernel of $\bar{f}$. 
\end{proposition}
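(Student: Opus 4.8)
The plan is to mirror the argument of Proposition \ref{p.3.ker} dualized, verifying the universal property of the cokernel in $Rep_{\!_{(Q,Q')}}$ by reducing everything to the component cokernels in $Rep_k(Q)$ and $Rep_k(Q')$, which exist by Proposition \ref{p.1}. First I would confirm that $\bar\eta = (\eta,\eta')\colon \bar W \to \bar K$ is actually a morphism in $Rep_{\!_{(Q,Q')}}$; this amounts to checking that the square relating $\psi'^{\alpha}_{\beta}$ and $\psi''^{\alpha}_{\beta}$ commutes, i.e.\ that $\psi''^{\alpha}_{\beta}\,\eta_{t(\alpha)} = \eta'_{s(\beta)}\,\psi'^{\alpha}_{\beta}$, which is immediate from the defining formula $\psi''^{\alpha}_{\beta}(w + f_{t(\alpha)}(V_{t(\alpha)})) = \psi'^{\alpha}_{\beta}(w) + f'_{s(\beta)}(V'_{s(\beta)})$. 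I would also note that $\bar f\,\bar\eta = \bar 0$ follows componentwise from $f\eta = 0$ and $f'\eta' = 0$ in the usual representation categories.

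Next I would establish the universal property. Given any morphism $\bar g = (g,g')\colon \bar W \to \bar N = (N,N',\Psi)$ in $Rep_{\!_{(Q,Q')}}$ with $\bar f\,\bar g = \bar 0$, the component equations $f g = 0$ and $f' g' = 0$ hold in $Rep_k(Q)$ and $Rep_k(Q')$ respectively. Since $(K,\eta)$ and $(K',\eta')$ are the cokernels there, Remark \ref{r.2} supplies unique morphisms $h\colon K \to N$ and $h'\colon K' \to N'$ with $\eta h = g$ and $\eta' h' = g'$, given explicitly on representatives by $h(w + f_{t(\alpha)}(V_{t(\alpha)})) = g(w)$ and similarly for $h'$. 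Setting $\bar h = (h,h')$, the equation $\bar\eta\,\bar h = \bar g$ is automatic from the component identities, and uniqueness of $\bar h$ follows from the uniqueness of $h,h'$ together with the fact that a morphism in $Rep_{\!_{(Q,Q')}}$ is determined by its two components.

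The one genuine thing to verify is that $\bar h = (h,h')$ is a morphism in $Rep_{\!_{(Q,Q')}}$, that is, the compatibility square $h'_{s(\beta)}\,\psi''^{\alpha}_{\beta} = \Psi^{\alpha}_{\beta}\,h_{t(\alpha)}$ commutes. I would check this by chasing a representative $w + f_{t(\alpha)}(V_{t(\alpha)}) \in K_{t(\alpha)}$: applying $\psi''^{\alpha}_{\beta}$ gives $\psi'^{\alpha}_{\beta}(w) + f'_{s(\beta)}(V'_{s(\beta)})$, then $h'_{s(\beta)}$ sends this to $g'_{s(\beta)}\,\psi'^{\alpha}_{\beta}(w)$; along the other route $h_{t(\alpha)}$ sends the representative to $g_{t(\alpha)}(w)$ and then $\Psi^{\alpha}_{\beta}$ yields $\Psi^{\alpha}_{\beta}\,g_{t(\alpha)}(w)$. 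These two expressions agree precisely because $\bar g$ is a morphism in $Rep_{\!_{(Q,Q')}}$, so $\Psi^{\alpha}_{\beta}\,g_{t(\alpha)} = g'_{s(\beta)}\,\psi'^{\alpha}_{\beta}$. This is the computation analogous to the displayed diagram chase in Proposition \ref{p.3.ker}, and it is the main (though still routine) obstacle, since it is the only step where the birepresentation structure maps $\psi''$ and $\Psi$ interact nontrivially; everything else is bookkeeping inherited from the two ambient module-like categories via Remark \ref{r.2}. I would conclude that $(\bar K,\bar\eta)$ satisfies the defining universal property, hence is the cokernel of $\bar f$ in $Rep_{\!_{(Q,Q')}}$.
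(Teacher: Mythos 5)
Your proposal is correct and follows essentially the same route as the paper's proof: reduce to the component cokernels in $Rep_k(Q)$ and $Rep_k(Q')$, use the explicit quotient description from Remark \ref{r.2} to write the induced maps on representatives, and verify the compatibility square $h'_{s(\beta)}\,\psi''^{\alpha}_{\beta} = \Psi^{\alpha}_{\beta}\,h_{t(\alpha)}$ by the same diagram chase, invoking that $\bar g$ is a morphism of birepresentations at the key step. Your additional preliminary check that $\bar\eta$ is itself a morphism in $Rep_{\!_{(Q,Q')}}$ is a small but welcome completion of what the paper leaves implicit.
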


\begin{proof}
Let $ \bar{\gamma}: \bar{W}  = (W,W',\psi) \rightarrow \bar{L}  = (L,L',\Psi)$ be a morphism in  $ Rep_{\!_{(Q,Q')}}$ with $\bar{\gamma} \bar{f} = \bar{0} $ and consider the following diagram. \\

\begin{equation} \label{diag.eq023}
\xymatrix{
& V_{s(\alpha)} \ar[rr]^{\phi_{\alpha}} \ar[dr]_{f_{s(\alpha)}} && V_{t(\alpha)} \ar[ddd]|!{[dd];[d]}\hole|!{[ddd];[dd]}\hole^(.5){\psi^{\alpha}_{\beta}}  \ar[dr]^{f_{t(\alpha)}}\\
 & & W_{s(\alpha)} \ar[dll]_(.5){\gamma_{s(\alpha)}} \ar[rr]|(.3){\phi'_{\alpha}}  \ar[dr]_{\eta_{s(\alpha)}}
&& W_{t(\alpha)}  \ar@/^2.5pc/[ddllll]|(.7){\gamma_{t(\alpha)}} \ar[ddd]|!{[dd];[d]}\hole^(.5){\psi'^{\alpha}_{\beta}}  \ar[dr]^{\eta_{t(\alpha)}}\\
 L_{s(\alpha)}   \ar[d]_{\nu_{\alpha}} & && K_{s(\alpha)} \ar[lll]|(.5){\sigma_{s(\alpha)}}   \ar[rr]|(.3){\phi''_{\alpha}} && K_{t(\alpha)} \ar@/^3pc/[dlllll]|(.7){\sigma_{t(\alpha)}} \ar[ddd]|(.5){\psi''^{\alpha}_{\beta}} \\
 L_{t(\alpha)} \ar[d]_(.4){\Psi^{\alpha}_{\beta}} &&& V'_{s(\beta)} \ar[rr]|\hole|(.65){\mu_{\beta}} \ar[dr]_{f'_{s(\beta)}} &&
V'_{t(\beta}  \ar[dr]^{f'_{t(\beta)}} \\
L'_{s(\beta)}   \ar[d]_{\nu'_{\beta}}  &&&&  W'_{s(\beta)} \ar@/^2pc/[llll]|(.4){\gamma'_{s(\beta)}} \ar[dr]_{\eta'_{s(\beta)}} \ar[rr]|\hole^(.3){\mu'_{\beta}} &&  W'_{t(\beta)} \ar@/^4pc/[dllllll]|(.6){\gamma'_{t(\beta)}} \ar[dr]^{\eta'_{t(\beta)}}\\
 L'_{t(\beta)} &&&&&  K'_{s(\beta)} \ar@/^2pc/[ulllll]|(.5){\sigma'_{s(\beta)}}  \ar[rr]_(.5){\mu''_{\beta}} &&  K'_{t(\beta)} \ar@/^4pc/[lllllll]|(.5){\sigma'_{t(\beta)}}
}
\end{equation}

\vspace{.2cm}

Since $(K_i,\phi_{\alpha}),(K'_{i'},\mu_{\beta})$ are the cokernels of $f,f'$ in  $ Rep_k(Q)$, $ Rep_k(Q')$ respectively, there exist unique morphisms $\sigma : K \rightarrow L, \,\, \sigma' : K' \rightarrow L'$ in  $ Rep_k(Q)$, $ Rep_k(Q')$ respectively making their respective diagrams commutative. So all we need is to show that $\bar{\sigma}= (\sigma,\sigma'): \bar{K} \rightarrow \bar{L}$ is a  morphism in  $ Rep_{\!_{(Q,Q')}}$. To show this, the constructions of the  cokernels of $f,f'$ in  $ Rep_k(Q)$, $ Rep_k(Q')$,  respectively, and Remark  (\ref{r.2}) imply that $\sigma (x+f(V)) = \gamma(x)$ and $\sigma' ((x'+f'(V'))) = \gamma'(x')$ for all $x \in W$ and $x' \in W'$ respectively. For any $w \in W_{t(\alpha)}$, we have \\

\begin{tabular}{lllll}
$\sigma'_{s(\beta)} \psi''^{\alpha}_{\beta}(w + f_{t(\alpha)}(V_{t(\alpha)}))$  &  $= \sigma'_{s(\beta)} (\psi'^{\alpha}_{\beta} (w) + f'_{s(\beta)}(V'_{s(\beta)})) $\\
&  (by the definition of $\psi''^{\alpha}_{\beta}$)\\
 & $= \gamma'_{s(\beta)} \psi'^{\alpha}_{\beta} (w)$\\
&  (since $\sigma' ((x'+f'(V'))) = \gamma'(x')$ for all $x' \in W'$) \\
 & $= \Psi^{\alpha}_{\beta} \gamma_{t(\alpha)} (w)  $\\
&  (since $\bar{\gamma}$ is a morphism in $ Rep_{\!_{(Q,Q')}}$) \\
& $= \Psi^{\alpha}_{\beta} \sigma_{t(\alpha)} \eta_{s(\alpha)} (w)$ \\
& (since $\bar{\gamma}= \bar{\sigma} \bar{\eta}$) \\
& $= \Psi^{\alpha}_{\beta} \sigma_{t(\alpha)} (w + f_{t(\alpha)}(V_{t(\alpha)}))$ \\
& (by the definition of $\eta$)\\
  \end{tabular}\\

\vspace{.2cm}
  
Therefore, $\bar{\sigma}: \bar{K} \rightarrow \bar{L}$ is a  morphism in  $ Rep_{\!_{(Q,Q')}}$, and thus $(\bar{K},\bar{\eta})$ is the cokernel of $\bar{f}$. \\

\end{proof}

\vspace{.2cm}

Using induction on $n$, one can use the same procedure used above to show that cokernels exist in the category $Rep_{\!_{(Q_1,Q_2,...,Q_n)}}$, and they can similarly be constructed.\\ Explicitly, let $\bar{V} = (V^{(1)},V^{(2)},...,V^{(n)},\psi_{\!_{1}}, \psi_{\!_{2}}, ..., \psi_{\!_{n-1}})$, $\bar{W} = (W^{(1)},W^{(2)},...,W^{(n)},\psi'_{\!_{1}}, \psi'_{\!_{2}}, ..., \psi'_{\!_{n-1}})$ be $n$-representations of $(Q_1, \,\, Q_2, \,\,... \,\, ,  Q_n)$, and let $\underline{f}: \bar{V} \rightarrow \bar{W}$ be a morphism of $n$-representations, where $\underline{f}=(f^{\!^{(1)}}, f^{\!^{(2)}}, ..., f^{\!^{(n)}})$. For any  $m \in \{2,\,\,..., \,\,n\}$, write 
\begin{center}
•$f^{\!^{(m)}} = (f^{\!^{(m)}}_{i^{(m)}}):  (V_{{i^{(m)}}},\phi^{{i^{(m)}}}_{\gamma^{(m)}}) \rightarrow  (W_{{i^{(m)}}},\mu^{{i^{(m)}}}_{\gamma^{(m)}})$,
\end{center}

By induction, the cokernel of $(f^{\!^{(1)}}, f^{\!^{(2)}}, ..., f^{\!^{(n-1)}})$ in $Rep_{\!_{(Q_1,Q_2,...,Q_{n-1})}}$ exists in $Rep_{\!_{(Q_1,Q_2,...,Q_{n-1})}}$. Let 
\begin{center}
•$((K^{(1)},K^{(2)},...,K^{(n-1)},\chi_{\!_{1}}, \chi_{\!_{2}}, ..., \chi_{\!_{n-2}}),(\eta^{\!^{(1)}}, \eta^{\!^{(2)}}, ..., \eta^{\!^{(n-1)}}))$
\end{center}
 be the cokernel of $(f^{\!^{(1)}}, f^{\!^{(2)}}, ..., f^{\!^{(n-1)}})$ in $Rep_{\!_{(Q_1,Q_2,...,Q_{n-1})}}$. 

For all $(\gamma^{(n-1)},\gamma^{(n)}) \in \mathsf{Q}^{(n-1)}_{\!_{1}} \times  \mathsf{Q}^{(n)}_{\!_{1}}$, define 

 \begin{center}
•$\chi_{\!_{n}\gamma^{(n-1)}}^{\gamma^{(n)}}: K^{(n-1)}_{t^{(n-1)}(\gamma^{(n-1)})} \rightarrow K^{(n)}_{s^{(n)}(\gamma^{(n)})} $
\end{center}
by $\chi_{\!_{n}\gamma^{(n-1)}}^{\gamma^{(n)}}(w + f^{(n-1)}_{t^{(n-1)}(\gamma^{(n-1)})}(V^{(n-1)}_{t^{(n-1)}(\gamma^{(n-1)})})) = {\psi'}_{\!_{n}\gamma^{(n-1)}}^{\gamma^{(n)}} (w) + f^{(n)}_{t^{(n)}(\gamma^{(n)})}(V^{(n)}_{t^{(n)}(\gamma^{(n)})})$ for all $w \in W^{(n-1)}_{t^{(n-1)}(\gamma^{(n-1)})}$. Then  $\chi_{\!_{n}\gamma^{(n-1)}}^{\gamma^{(n)}}$ is well defined for all $(\gamma^{(n-1)},\gamma^{(n)}) \in \mathsf{Q}^{(n-1)}_{\!_{1}} \times  \mathsf{Q}^{(n)}_{\!_{1}}$ by using similar argument used for the case $n=2$. \\
Let $\underline{K} = (K^{(1)},K^{(2)},...,K^{(n)},\chi_{\!_{1}}, \chi_{\!_{2}}, ..., \chi_{\!_{n-1}})$,  $\underline{\eta} = (\eta^{\!^{(1)}}, \eta^{\!^{(2)}}, ..., \eta^{\!^{(n)}})$ and $\underline{f} = (f^{\!^{(1)}}, f^{\!^{(2)}}, ..., f^{\!^{(n)}})$, where $(K^{(n)},\eta^{\!^{(n)}})$. Using similar argument used in Proposition (\ref{p.4.coker}) gives the following consequence. \\

\begin{proposition} \label{p.4.1.coker.n.rep} 
The pair $(\underline{K}, \underline{\eta}))$ is the cokernel of $\underline{f}$ in $Rep_{\!_{(Q_1,Q_2,...,Q_{n})}}$.  \\
\end{proposition}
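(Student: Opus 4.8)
The plan is to proceed by induction on $n$, the base case $n = 2$ being precisely Proposition (\ref{p.4.coker}). So assume the statement for $(n-1)$-representations. By the inductive hypothesis the truncation $((K^{(1)},\dots,K^{(n-1)},\chi_{1},\dots,\chi_{n-2}),(\eta^{(1)},\dots,\eta^{(n-1)}))$ is the cokernel of $(f^{(1)},\dots,f^{(n-1)})$ in $Rep_{(Q_1,\dots,Q_{n-1})}$, and $(K^{(n)},\eta^{(n)})$ is the cokernel of $f^{(n)}$ in $Rep_k(Q_n)$ by Proposition (\ref{p.1}). Throughout I identify each cokernel representation concretely as $K^{(m)} = W^{(m)}/f^{(m)}(V^{(m)})$ with $\eta^{(m)}$ the canonical projection, exactly as in Remark (\ref{r.2}).

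First I would confirm that $\underline{K}$ really is an object of $Rep_{(Q_1,\dots,Q_n)}$: the only data not already supplied by the inductive hypothesis are the top connecting maps $\chi_n$, defined on classes by $\chi_n(w + f^{(n-1)}(V^{(n-1)})) = \psi'_n(w) + f^{(n)}(V^{(n)})$. Their well-definedness is the same computation as in the case $n=2$, using that $\psi'_n$ carries $f^{(n-1)}(V^{(n-1)})$ into $f^{(n)}(V^{(n)})$ because $\underline{f}$ is a morphism of $n$-representations. Next I would check that $\underline{\eta}=(\eta^{(1)},\dots,\eta^{(n)})$ is a morphism $\bar{W}\to\underline{K}$: the compatibility squares (\ref{diag.eq010.1}) for $m\in\{2,\dots,n-1\}$ commute by the inductive hypothesis, while the single new square at level $m=n$ commutes immediately from the defining formula for $\chi_n$. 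Finally $\underline{\eta}\,\underline{f}=\underline{0}$ holds componentwise, since each $\eta^{(m)}f^{(m)}=0$ in $Rep_k(Q_m)$.

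For the universal property I would take any $\underline{\gamma}=(\gamma^{(1)},\dots,\gamma^{(n)})\colon\bar{W}\to\bar{L}$ with $\underline{\gamma}\,\underline{f}=\underline{0}$ and build the mediating map $\underline{\sigma}$ in two stages. Its first $n-1$ components $(\sigma^{(1)},\dots,\sigma^{(n-1)})$ are the unique mediating morphism in $Rep_{(Q_1,\dots,Q_{n-1})}$ furnished by the inductive hypothesis applied to the truncation of $\underline{\gamma}$, which is itself a morphism of $(n-1)$-representations by Remark (\ref{r.inductive}); and $\sigma^{(n)}$ is the unique map out of $K^{(n)}$ with $\sigma^{(n)}\eta^{(n)}=\gamma^{(n)}$ coming from the cokernel of $f^{(n)}$. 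As in Remark (\ref{r.2}), each component is given on representatives by $\sigma^{(m)}(x + f^{(m)}(V^{(m)}))=\gamma^{(m)}(x)$, and this representative description already forces uniqueness of $\underline{\sigma}$.

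The main obstacle — the only point not handed over directly by induction — is to verify that this $\underline{\sigma}$ is a morphism of $n$-representations, i.e.\ that the top square relating $\chi_n$ to the corresponding connecting map $\Psi_n$ of $\bar{L}$ commutes. I would settle this by replaying the diagram chase of Proposition (\ref{p.4.coker}) at the interface between levels $n-1$ and $n$: evaluate $\sigma^{(n)}_{s(\gamma^{(n)})}\,\chi_n$ on a class $w + f^{(n-1)}(V^{(n-1)})$, rewrite using the definition of $\chi_n$, then apply the representative formula for $\sigma^{(n)}$, the fact that $\underline{\gamma}$ is a morphism of $n$-representations, and the relations $\underline{\gamma}=\underline{\sigma}\,\underline{\eta}$ together with the definition of $\eta^{(n-1)}$, to arrive at $\Psi_n\,\sigma^{(n-1)}_{t(\gamma^{(n-1)})}$ evaluated on the same class. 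Since all lower-level squares commute by induction, this shows $(\underline{K},\underline{\eta})$ satisfies the universal property of the cokernel, completing the induction.
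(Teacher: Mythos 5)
Your proposal is correct and follows essentially the same route as the paper: induction on $n$ with Proposition (\ref{p.4.coker}) as the base case, constructing the new connecting maps $\chi_{\!_{n}}$ on cosets exactly as the paper does, and verifying the universal property by replaying the coset-level diagram chase of Proposition (\ref{p.4.coker}) at the interface between levels $n-1$ and $n$. The paper leaves this argument implicit (its proof environment is empty, deferring to ``similar argument used in Proposition (\ref{p.4.coker})''), and your write-up fills in precisely the intended details.
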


\begin{proof}

\end{proof}

\vspace{.2cm}

\section{\textbf{Canonical Decomposition of Morphisms in $Rep_{\!_{(Q,Q')}}$}}\label{s.Factorization} 
Following \cite[p. 2]{Etingof},  an \textbf{additive} category is a category $\Cc$ satisfying the following
axioms:
\begin{enumerate}[label=(\roman*)]
\item Every set $\Cc(X,Y)$ is equipped with a structure of an abelian group (written additively) such that composition of morphisms is biadditive with respect to this structure.
\item There exists a zero object $0 \in \Cc$ such that $\Cc(0,0)=0$.
\item (Existence of direct sums.) For any objects $X,X' \in \Cc$, the direct sum $X \oplus X' \in \Cc$.
\end{enumerate} 
Let $k$ be a field. An additive category $\Cc$ is said to be\textbf{ $k$-linear} if for any objects $X,Y \in \Cc(X,Y)$ is equipped
with a structure of a vector space over $k$, such that composition of morphisms is $k$-linear.\\
An \textbf{abelian} category is an additive category $\Cc$ in which for every morphism $f : X \rightarrow Y$ there exists a sequence 
\begin{equation} \label{diag.eq025}
\xymatrix{
K \ar[r]^{k} & X \ar[r]^{\iota} & I \ar[r]^{j} & Y \ar[r]^{c} & C
} 
\end{equation} \\

with the following properties:
\begin{enumerate}[label=(\roman*)]
\item $ji = f$,
\item $(K,k) = Ker(f), \,\ (C, c) = Coker(f)$,
\item $(I, \iota) = Coker(k), \,\ (I, j) = Ker(c)$.\\
\end{enumerate}  
A sequence (\ref{diag.eq025}) is called a \textbf{canonical decomposition} of $f$. \\
   
Let $\bar{f} =(f,f') : \bar{X} \rightarrow \bar{Y}$ be a morphism in $Rep_{\!_{(Q,Q')}}$.  
It follows that $f : X \rightarrow Y, \,\ f' : X' \rightarrow Y'$ are morphisms in $Rep_k(Q)$, $Rep_k(Q')$ respectively. From Proposition \ref{p.1}, $f : X \rightarrow Y, \,\ f' : X' \rightarrow Y'$ have the canonical decompositions 
\begin{equation} \label{diag.eq026}
\xymatrix{
K \ar[r]^{k} & X \ar[r]^{\iota} & I \ar[r]^{j} & Y \ar[r]^{c} & C
} 
\hspace{70pt}
\xymatrix{
K' \ar[r]^{k'} & X' \ar[r]^{\iota'} & I' \ar[r]^{j'} & Y' \ar[r]^{c'} & C'
} 
\end{equation}
in $Rep_k(Q)$, $Rep_k(Q')$ respectively.  From Section \ref{s.construction.kernels.cokernels}, kernels and cokernels exist in  $Rep_{\!_{(Q,Q')}}$. It turns out that $\bar{f}$ has a canonical decomposition 
\begin{equation} \label{diag.eq025}
\xymatrix{
\bar{K} \ar[r]^{\bar{k}} & \bar{X} \ar[r]^{\bar{\iota}} & \bar{I} \ar[r]^{\bar{j}} & \bar{Y} \ar[r]^{\bar{c}} & \bar{C}
}
\end{equation} 
in $Rep_{\!_{(Q,Q')}}$, and this decomposition can explicitly be seen in the following commutative diagram.
    
\begin{equation} \label{diag.eq028}
\xymatrix{
&K_{s(\alpha)} \ar[rr]^{\phi_{\alpha}} \ar[dr]|{k_{s(\alpha)}} && K_{t(\alpha)} \ar[ddd]|!{[dd];[d]}\hole|!{[ddd];[dd]}\hole^(.5){\psi^{\alpha}_{\beta}}  \ar[dr]^{k_{t(\alpha)}}\\
&& X_{s(\alpha)} \ar[rr]|(.3){\phi'_{\alpha}}  \ar[dr]|{\iota_{s(\alpha)}}
&& X_{t(\alpha)} \ar[ddd]|!{[dd];[d]}\hole^(.5){\psi'^{\alpha}_{\beta}}  \ar[dr]^{\iota_{t(\alpha)}}\\
&C_{s(\alpha)} \ar[dl]|{\varphi''_{\alpha}} &Y_{s(\alpha)} \ar[dl]|{\varphi'_{\alpha}} \ar[l]_{c_{s(\alpha)}}& I_{s(\alpha)} \ar[rr]|(.3){\phi''_{\alpha}}  \ar[l]_{j_{s(\alpha)}} && I_{t(\alpha)}  \ar@/_.35pc/[dllll]|(.7){j_{t(\alpha)}} \ar[ddd]|(.5){\psi''^{\alpha}_{\beta}} \\
C_{t(\alpha)} \ar@/_4pc/[ddr]_(.5){\Psi'^{\alpha}_{\beta}} &Y_{t(\alpha)} \ar@/^1pc/[ddr]^(.5){\Psi^{\alpha}_{\beta}} \ar[l]_{c_{t(\alpha)}}  &  & K'_{s(\beta)} \ar[rr]|\hole|(.65){\mu_{\beta}} \ar[dr]_{k'_{s(\beta)}} & &
K'_{t(\beta}  \ar[dr]^{k'_{t(\beta)}} \\
C'_{t(\beta)} & Y'_{t(\beta)} \ar[l]_(.5){c'_{s(\beta)}} &&&  X'_{s(\beta)}  \ar[dr]_{\iota'_{s(\beta)}} \ar[rr]|\hole^(.3){\mu'_{\beta}} &&  X'_{t(\beta)}\ar[dr]^{\iota'_{t(\beta)}}\\
&C'_{s(\beta)} \ar@/^0.7pc/[ul]|(.5){\nu'_{\beta}} & Y'_{s(\beta)} \ar@/^0.7pc/[ul]|(.5){\nu_{\beta}} \ar@/^1pc/[l]^(.5){c'_{t(\beta)}}  & &&  I'_{s(\beta)} \ar@/^3.5pc/[lll]|(.5){j'_{t(\beta)}}  \ar[rr]_(.5){\mu''_{\beta}} &&  I'_{t(\beta)} \ar@/^3.5pc/[ullllll]|(.5){j'_{t(\beta)}}
}
\end{equation}  

\vspace{.2cm}

This implies that any  morphism $\underline{f}: \bar{V} \rightarrow \bar{W}$ of $n$-representations has a canonical decomposition in $Rep_{\!_{(Q_1,Q_2,...,Q_{n})}}$.\\

\begin{remark} 
Let $\,\, \bar{f}, \,\, \bar{g}: \bar{V} \rightarrow \bar{W}$ be morphisms in  $Rep_{\!_{(Q,Q')}}$. Write   $\bar{f} = (f,f'), \,\, \bar{g} = (g,g')$,  $f=(f_i), \,\, g=(g_i)$, $f'=(f'_{i'}), \,\, g'=(g'_{i'})$, $\bar{V} = (V,V',\psi)$, $\bar{W} = (W,W',\psi')$. Define $\bar{f} + \bar{g} = (f+g,f'+g') = ((f_i + g_i), (f'_{i'} + g'_{i'}))$. Since $Rep_k(Q)$ and $Rep_k(Q')$ are abelian, the sets  $Rep_k(Q)(V,W)$, $Rep_k(Q)(V',W')$ are equipped with a structure of an abelian group such that composition of morphisms is biadditive with respect to this structure \cite[p. 70]{Assem}. 
Since $\,\, \bar{f}, \,\, \bar{g}$ are  morphisms in  $Rep_{\!_{(Q,Q')}}$ and since the category $Vec_k$ is abelian, we have the following commutative diagram.

\begin{equation} \label{diag.eq010}
\xymatrix{
V_{s(\alpha)} \ar[rr]^{\phi_{\alpha}} \ar[dr]_{f_{s(\alpha)} + g_{s(\alpha)}} && V_{t(\alpha)} \ar[dd]|\hole^(.3){\psi^{\alpha}_{\beta}} \ar[dr]^{f_{t(\alpha)} + g_{t(\alpha)}}\\
& W_{s(\alpha)} \ar[rr]|(.3){\phi'_{\alpha}} && W_{t(\alpha)} \ar[dd]^(.32){\psi'^{\alpha}_{\beta}}\\
&& V'_{s(\beta)} \ar[rr]|\hole|(.65){\mu_{\beta}} \ar[dr]_{f'_{s(\beta)} + g'_{s(\beta)}} &&
V'_{t(\beta}  \ar[dr]^{f'_{t(\beta)} + g'_{t(\beta)}} \\
&&&  W'_{s(\beta)}  \ar[rr]_{\mu'_{\beta}} &&  W'_{t(\beta)}
}
\end{equation}
Thus, the set $Rep_{\!_{(Q,Q')}}(\bar{V},\bar{W})$ is equipped with a structure of an abelian group such that composition of morphisms is biadditive with respect to the above structure.\\
\end{remark}
 
We end the paper with the following crucial results.\\

\begin{theorem} \label{thm.Abelian1} 
The category  $Rep_{\!_{(Q,Q')}}$ is a $k$-linear abelian category. \\
\end{theorem}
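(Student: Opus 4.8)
The plan is to verify, one bundle at a time, the three sets of axioms recalled above — additive, $k$-linear, and abelian — by reducing each to the corresponding property of the component categories $Rep_k(Q)$ and $Rep_k(Q')$, which are $k$-linear abelian by Proposition \ref{p.1}. Since every piece of structure on $Rep_{\!_{(Q,Q')}}$ (hom-group, scalar action, direct sum, kernel, cokernel) is defined componentwise, the recurring strategy is to assemble the required data in each component and then check that the compatibility maps $\psi^{\alpha}_{\beta}$ are respected. For the additive axioms: axiom (i), that each $Rep_{\!_{(Q,Q')}}(\bar{V},\bar{W})$ is an abelian group with biadditive composition, is exactly the content of the Remark preceding this theorem. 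For axiom (ii), the triple $\bar{0}=(0,0,0)$ with both representations zero and all compatibility maps vanishing is a zero object, since a morphism $\bar{0}\to\bar{0}$ is a pair of morphisms $0\to 0$ in $Rep_k(Q)$ and $Rep_k(Q')$, each necessarily zero, whence $Rep_{\!_{(Q,Q')}}(\bar{0},\bar{0})=0$. Axiom (iii), existence of direct sums, is Definition \ref{def.3}, the displayed block-diagonal $\psi$ being the unique choice making the canonical injections and projections into morphisms of birepresentations.

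Next I would promote this to $k$-linearity. Scalar multiplication is defined componentwise by $c\cdot\bar{f}=(c\cdot f,\,c\cdot f')$, using the $k$-vector space structures on $Rep_k(Q)(V,W)$ and $Rep_k(Q')(V',W')$; one checks that $c\cdot\bar{f}$ is again a morphism of birepresentations, since scaling both vertical legs of diagram (\ref{diag.eq010}) by $c$ preserves commutativity by $k$-linearity of the maps $\psi^{\alpha}_{\beta}$. Composition is $k$-bilinear because it is so in each component, so $Rep_{\!_{(Q,Q')}}$ is $k$-linear.

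Finally, for the abelian axioms I would invoke the canonical decomposition constructed in Section \ref{s.Factorization}: every $\bar{f}=(f,f')$ yields the sequence $\bar{K}\xrightarrow{\bar{k}}\bar{X}\xrightarrow{\bar{\iota}}\bar{I}\xrightarrow{\bar{j}}\bar{Y}\xrightarrow{\bar{c}}\bar{C}$, assembled componentwise from the canonical decompositions of $f$ and $f'$ in (\ref{diag.eq026}). Property $\bar{j}\bar{\iota}=\bar{f}$ is immediate from $j\iota=f$ and $j'\iota'=f'$. For the remaining properties I would cite Propositions \ref{p.3.ker} and \ref{p.4.coker}, which identify kernels and cokernels in $Rep_{\!_{(Q,Q')}}$ as built componentwise from those in $Rep_k(Q)$ and $Rep_k(Q')$, with the compatibility map obtained by restriction (for kernels) or passage to the quotient (for cokernels). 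Hence $(\bar{K},\bar{k})=Ker(\bar{f})$ and $(\bar{C},\bar{c})=Coker(\bar{f})$, and $(\bar{I},\bar{\iota})=Coker(\bar{k})$, $(\bar{I},\bar{j})=Ker(\bar{c})$ follow from the same identities in the component categories.

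The step I expect to be the main obstacle is this last one: confirming that the middle object $\bar{I}$ simultaneously realizes $Coker(\bar{k})$ and $Ker(\bar{c})$ \emph{as a birepresentation}, i.e. that the compatibility map $\psi''^{\alpha}_{\beta}$ carried by $\bar{I}$ is the same whether $\bar{I}$ is produced as a cokernel or as a kernel. Componentwise this coincidence is automatic from abelianness of $Rep_k(Q)$ and $Rep_k(Q')$; the genuine content is that the restriction procedure (Proposition \ref{p.3.ker}) and the quotient procedure (Proposition \ref{p.4.coker}) of Section \ref{s.construction.kernels.cokernels} induce the \emph{same} compatibility map on $\bar{I}$. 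This holds because both are the map induced by $\psi^{\alpha}_{\beta}$ on the common subquotient $I_{t(\alpha)}$ of $Y_{t(\alpha)}$ and $I'_{s(\beta)}$ of $Y'_{s(\beta)}$, so once the two constructions are shown to land on the same $\psi''^{\alpha}_{\beta}$, all three abelian properties are established and $Rep_{\!_{(Q,Q')}}$ is a $k$-linear abelian category.
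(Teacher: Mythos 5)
Your proposal is correct and is essentially the paper's own argument: the paper leaves the body of this proof empty, the intended justification being exactly the assembly you perform — the preceding Remark for the hom-group axiom, the zero object, the direct sums of Definition \ref{def.3}, componentwise $k$-linearity, and the canonical decomposition of Section \ref{s.Factorization}, whose kernel and cokernel data come from Propositions \ref{p.3.ker} and \ref{p.4.coker}. The one point the paper never makes explicit — that the restriction construction for $Ker(\bar{c})$ and the quotient construction for $Coker(\bar{k})$ endow the middle term $\bar{I}$ with the same compatibility maps $\psi''^{\alpha}_{\beta}$ — is precisely the issue you isolate, and your resolution (both maps are induced by the structure maps of the ambient birepresentations, identified under the first isomorphism theorem because $\bar{f}$ is a morphism of birepresentations) is correct.
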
 

\begin{proof} 

\end{proof} 
  
\begin{theorem} \label{thm.Abelian2} 
The category  $Rep_{\!_{(Q_1,Q_2,...,Q_{n})}}$ is a $k$-linear abelian category for any integer $n \geq 2$. \\
\end{theorem}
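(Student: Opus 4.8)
The plan is to argue by induction on $n$, taking Theorem \ref{thm.Abelian1} as the base case $n=2$. Suppose the result holds for $n-1$, so that $Rep_{\!_{(Q_1,Q_2,...,Q_{n-1})}}$ is a $k$-linear abelian category; I will deduce the same for $Rep_{\!_{(Q_1,Q_2,...,Q_{n})}}$ by verifying the three requirements in the definition of a $k$-linear abelian category recalled at the start of Section \ref{s.Factorization}: the additive ($k$-linear) structure, the existence of a zero object and of direct sums, and the existence of a canonical decomposition for every morphism.

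First I would establish the additive structure. Given two morphisms $\underline{f},\underline{g}: \bar{V} \to \bar{W}$ of $n$-representations, their componentwise sum $\underline{f}+\underline{g} = (f^{(1)}+g^{(1)}, \ldots, f^{(n)}+g^{(n)})$ is again a morphism: the commuting squares (\ref{diag.eq010.1}) for $\underline{f}+\underline{g}$ hold because each $Rep_k(Q_m)$ is abelian and because the category of $k$-vector spaces is $k$-linear, exactly as in the remark preceding the theorems for the case $n=2$. Hence each hom set $Rep_{\!_{(Q_1,Q_2,...,Q_{n})}}(\bar{V},\bar{W})$ carries, componentwise, a $k$-vector space structure with respect to which composition is $k$-bilinear. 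The zero object is the tuple of zero representations with all connecting families equal to $0$, and direct sums exist by the evident $n$-fold extension of Definition \ref{def.3}. This shows $Rep_{\!_{(Q_1,Q_2,...,Q_{n})}}$ is $k$-linear additive.

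Next I would invoke Propositions \ref{p.3.1.ker.n.rep} and \ref{p.4.1.coker.n.rep}, which already guarantee that every morphism $\underline{f}$ has a kernel $(\underline{\kappa},\underline{\zeta})$ and a cokernel $(\underline{K},\underline{\eta})$, both built from the corresponding kernel or cokernel of the truncated morphism $(f^{(1)},\ldots,f^{(n-1)})$ in $Rep_{\!_{(Q_1,Q_2,...,Q_{n-1})}}$ (which exists by Remark \ref{r.inductive}(ii) together with the inductive hypothesis) and of the last component $f^{(n)}$ in $Rep_k(Q_n)$, the new connecting maps arising by restriction in the kernel case and by the quotient formula in the cokernel case. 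To finish, I would assemble the canonical decomposition $\bar{K} \to \bar{X} \to \bar{I} \to \bar{Y} \to \bar{C}$: apply the inductive hypothesis to the first $n-1$ coordinates and the canonical decomposition in $Rep_k(Q_n)$ to the last coordinate, then define the connecting family on the intermediate object $\bar{I}$ on each of its components exactly as in diagram (\ref{diag.eq028}) for the $n=2$ case.

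The main obstacle is this last step: checking that the connecting maps induced on $\bar{I}$ are well defined and that the resulting sequence genuinely satisfies $(\bar{I}, \bar{\iota}) = \mathrm{Coker}(\bar{k})$ and $(\bar{I}, \bar{j}) = \mathrm{Ker}(\bar{c})$ in $Rep_{\!_{(Q_1,Q_2,...,Q_{n})}}$, rather than merely componentwise. Concretely, one must verify that the restriction (resp. quotient) maps defining the last connecting family $\xi_{n}$ (resp. $\chi_{n}$) are compatible with the image factorization; this is precisely the well-definedness argument carried out for $n=2$ in the constructions preceding Propositions \ref{p.3.ker} and \ref{p.4.coker}, now applied to the pair $(Q_{n-1},Q_n)$, while the inductive hypothesis disposes of the interaction among the first $n-1$ quivers. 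Since kernels, cokernels, and images in $Rep_{\!_{(Q_1,Q_2,...,Q_{n})}}$ are all computed componentwise on the underlying representations, the three defining properties of the canonical decomposition reduce, coordinate by coordinate, to the already-established cases $n-1$ and $2$, which completes the induction.
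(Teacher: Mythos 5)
Your proposal is correct and follows essentially the same route as the paper's own (largely implicit) argument: the paper obtains this theorem by assembling exactly the ingredients you list --- the componentwise additive and $k$-linear structure on hom sets, the zero object and direct sums as in Definition~\ref{def.3}, the kernels and cokernels of Propositions~\ref{p.3.1.ker.n.rep} and~\ref{p.4.1.coker.n.rep}, and the canonical decomposition extending diagram~(\ref{diag.eq028}), all organized by the same induction on $n$ with base case Theorem~\ref{thm.Abelian1}. Your attention to the key subtlety --- that the connecting maps induced on the intermediate object must be well defined and that the universal properties must hold in $Rep_{\!_{(Q_1,Q_2,...,Q_{n})}}$ rather than merely componentwise --- is precisely the content of the well-definedness arguments the paper carries out before Propositions~\ref{p.3.ker} and~\ref{p.4.coker}.
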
 

\begin{proof} 

\end{proof}

 \vspace{.5cm}

\begin{center}
• \textbf{Acknowledgment}
\end{center}
I would like to thank my adivsor Prof. Miodrag Iovanov, who I learned a lot from him,  for his support and his unremitting encouragement.\\

 \vspace{.5cm}

\vspace*{3mm} 
\begin{flushright}
\begin{minipage}{148mm}\sc\footnotesize

Adnan Hashim Abdulwahid\\
University of Iowa, \\
Department of Mathematics, MacLean Hall\\
Iowa City, IA, USA

{\tt \begin{tabular}{lllll}
{\it E--mail address} : &   {\color{blue} adnan-al-khafaji@uiowa.edu}\\
&  {\color{blue} adnanalgebra@gmail.com}\\
\end{tabular} }\vspace*{3mm}
\end{minipage}
\end{flushright}

\end{document}